\documentclass[12pt]{elsarticle}
\usepackage{mystyleElsArticle}
\usepackage{mathrsfs}
\usepackage{booktabs}

%\usepackage{graphicx}
%\usepackage{subfigure}
% mathSymbols.tex
% This file compiles math symbols used throughout the thesis.
% The symbols are specifically for the thesis
% General purpose math macro should be put into mystylemath.sty instead
% 
% Author:          Mulin Cheng
% Created On:      May 25th, 2012
% Last modifed:    May 25th, 2012
%

\footskip = 20pt
\headsep  = 20pt
\usepackage[margin=1.00 in]{geometry}    % Margin: 1-inch all round

\renewcommand\d{\mathrm{d}}

\newcommand{\veps}{\varepsilon}
\def\epsilon{\varepsilon}
\def\bx{\textbf{x}}
\def\bc{\textbf{c}}
\newcommand{\secref}[1]{\S\ref{#1}}

\newtheorem{example}[theorem]{Example}

\makeatletter
\def\ps@pprintTitle{%
	\let\@oddhead\@empty
	\let\@evenhead\@empty
	\let\@oddfoot\@empty
	\let\@evenfoot\@oddfoot
}
\makeatother

\begin{document}
	
	\begin{frontmatter}
		\title{ A multiscale finite element method for the Schr\"{o}dinger equation with multiscale potentials}

		\author[SoochowUniv]{Jingrun Chen}
		\ead{jingrunchen@suda.edu.cn}
		\author[hku]{Dingjiong Ma}
		\ead{martin35@hku.hk}
		\author[hku]{Zhiwen Zhang\corref{cor1}}
		\ead{zhangzw@hku.hk}
		
		\address[SoochowUniv]{Mathematical Center for Interdisciplinary Research and School of Mathematical Sciences, Soochow University, Suzhou, China.}
		\address[hku]{Department of Mathematics, The University of Hong Kong, Pokfulam Road, Hong Kong SAR, China.}
		\cortext[cor1]{Corresponding author}		
		
		\begin{abstract}
			\noindent	
			In recent years, an increasing attention has been paid to quantum heterostructures with tailored functionalities,
			such as heterojunctions and quantum matematerials, in which quantum dynamics of electrons can be described by the
			Schr\"{o}dinger equation with multiscale potentials. The model, however, cannot be solved by asymptoics-based approaches
			where an additive form of different scales in the potential term is required to construct the prescribed
			approximate solutions. In this paper, we propose a multiscale finite element method to solve this problem in the
			semiclassical regime. The localized multiscale basis are constructed using sparse compression of the Hamiltonian
			operator, and thus are ``blind" to the specific form of the potential term. After an one-shot eigendecomposition,
			we solve the resulting system of ordinary differential equations explicitly for the time evolution.
			In our approach, the spatial mesh size is $ H=\mathcal{O}(\epsilon) $ where $\epsilon$ is the semiclassical parameter and
			the time stepsize $ k$ is independent of $\epsilon$.
			Numerical examples in one dimension with a periodic potential, a multiplicative two-scale potential, and a layered potential,
			and in two dimension with an additive two-scale potential and a checkboard potential are tested to demonstrate the robustness
			and efficiency of the proposed method. Moreover, first-order and second-order rates of convergence are observed in $H^1$ and $L^2$ norms, respectively.
			
			\medskip
			\noindent{\textbf{Keyword}:} Schr\"{o}dinger equation; localized basis function; operator compression; optimization method; multiscale potential.
			
			\medskip
			\noindent{{\textbf{AMS subject classifications.}}~65M60, 74Q10, 35J10}
		\end{abstract}
		
		%        \subjclass[2000]{65M60, 74Q10, 35J10}
		
		%		\begin{keyword}
		%			Schr\"{o}dinger equation; localized basis function; operator compression; multiscale potential.
		%		\end{keyword}
		
		%%		\begin{premaker}
		%			aa
		%		\end{premaker}
	\end{frontmatter}
	% \newpage
	% \tableofcontents
	% \listoffigures
	% \listoftables
	% \newpage
	
	\section{Introduction} \label{sec:introduction}
	\noindent
	In solid state physics, one of the most popular models to describe electron dynamics is the Schr\"{o}dinger
	equation in the semiclassical regime
	\begin{equation}
	\left\{
	\begin{aligned}
	i\epsilon\partial_t\psi^\epsilon&=-\frac{\epsilon^2}{2}\Delta\psi^\epsilon+v^{\epsilon}(\bx) \psi^\epsilon,
	\quad \bx\in \mathbb{R}^d, \quad t\in \mathbb{R},\\
	\psi^\epsilon|_{t=0}&=\psi_{\textrm{in}}(\bx),\quad \bx\in \mathbb{R}^d,
	\end{aligned}
	\right.
	\label{eqn:Sch}
	\end{equation}
	where $ 0<\epsilon \ll1 $ is an effective Planck constant describing the microscopic and macroscopic scale ratio,
	$ d $ is the spatial dimension, $ v^{\epsilon}(\bx) $ is the given electrostatic potential,
	$\psi^\epsilon = \psi^\epsilon(t,\bx) $ is the wavefunction, and $ \psi_{\textrm{in}}(\bx) $ is the initial data. In the community of mathematics, there has been a long history of interest from both mathematical and
	numerical perspectives; see for example the review paper \cite{JinActa:2011} and references therein.
	
	In the simplest situation, $v^{\epsilon}(\bx) = u(\bx)$, where $u(\bx)$ is an (external) macroscopic potential.
	$ \psi^\epsilon(t,\bx) $ propagates oscillations with a wavelength of $\mathcal{O}(\epsilon)$,
	a uniform $L^2-$approximation of the wavefunction requires the spatial mesh size $h=o(\epsilon)$ and
	the time step $k=o(\epsilon)$ in finite element method (FEM) and finite difference method (FDM) \cite{BaoJinMarkowich:2002, JinActa:2011}.
	If the spectral time-splitting method is employed, a uniform $L^2-$approximation of the wavefunction requires
	the spatial mesh size $h=\mathcal{O}(\epsilon)$ and the time stepsize $k=o(\epsilon)$ \cite{BaoJinMarkowich:2002}.
	For a perfect crystal, in the presence of an external field,  $v^{\epsilon}(\bx) = v(\frac{\bx}{\epsilon}) + u(\bx)$,
	where $v(\frac{\bx}{\epsilon})$ describes the electrostatic interaction of ionic cores.
	A number of methods have been proposed by taking advantage of the periodic structure of $v(\frac{\bx}{\epsilon})$,
	such as the Bloch decomposition based time-splitting spectral method \cite{Huangetal:2007, Huang:2008},
	the Gaussian beam method \cite{Jin:08, Jin:2010, Qian:2010, Yin:2011}, and the frozen Gaussian approximation
	method \cite{DelgadilloLuYang:2016}. The Bloch decomposition based time-splitting spectral method requires
	a mesh strategy $h=\mathcal{O}(\epsilon)$ and $k=\mathcal{O}(1)$ for the uniform $L^2-$approximation of the
	wavefunction. The Gaussian beam method and the frozen Gaussian approximation method are based on asymptotic
	analysis, and thus are especially efficient when $\epsilon$ is very small.
	
	With recent developments in nanotechnology, a variety of material devices with tailored functionalities have been fabricated,
	such as heterojunctions, including the ferromagnet/metal/ferromagnet structure for giant megnetoresistance \cite{Zutic:2004},
	the silicon-based heterojunction for solar cells \cite{Louwenetal:2016}, and quantum metamaterials \cite{Quach:2011}.
	A basic feature of these devices is the combination of dissimilar crystalline structures, which results
	a heterogeneous interaction from ionic cores with different lattice structures. Therefore, when travelling through
	a device, electrons experience a potential $v^{\epsilon}(\bx)$ which cannot be written in the abovementioned form.
	Consequently, all the available methods based on asymptotic analysis cannot be applied. Moreover, direct methods,
	such as FEM and FDM, are extremely inefficient with strong mesh size restrictions. This motivates us to design efficient numerical methods for \eqref{eqn:Sch} in the general situation.
	
	Our work is motivated by the multiscale FEM for solving elliptic problems with multiscale coefficients \cite{HouWuCai:99,EfendievHou:09}. The multiscale FEM is capable of correctly capturing the large scale components of the multiscale solution on a coarse grid without accurately resolving all the small scale features in the solution. This is accomplished by incorporating the local microstructures of the differential operator into the finite element basis functions. We remark that in the past four decades, many other efficient methods have been developed for the multiscale PDEs in the literature; see \cite{Babuska:94,Hughes:1998,ChenHou:02,Jenny:03,EngquistE:03,Kevrekidis:2003,OwhadiZhang:07,HanZhangCMS:12} for example and references therein.
	
	Recently, several works relevant to the compression of elliptic operator with heterogeneous and highly varying coefficients have been proposed. In \cite{Peterseim:2014}, Malqvist and Peterseim construct localized multiscale basis functions using a modified variational multiscale method. The exponentially decaying property of these modified basis has been shown both theoretically and numerically. Meanwhile, Owhadi \cite{Owhadi:2015,Owhadi:2017} reformulates the multiscale problem from the perspective of decision theory using the idea of gamblets as the modified basis. In particular, a coarse space of measurement functions is constructed from Bayesian perspective, and the gamblet space is explicitly constructed. In addition, the gamblets are still proven to decay exponentially such that localized computation is made possible. Hou and Zhang \cite{hou2017sparse} extend these works such that localized basis functions can also be constructed for higher-order strongly elliptic operators.
	
	In this paper, we propose a multiscale FEM to solve the Schr\"{o}dinger equation in the semiclassical regime. The localized multiscale basis are constructed using sparse compression of the Hamiltonian operator, and thus are ``blind" to the specific form of the potential. After an one-shot eigendecomposition, we can solve the resulting system of ordinary differential equations explicitly for the time evolution. In our approach, $ H=\mathcal{O}(\epsilon) $ and $ k$ is independent of $\epsilon$. Numerical examples in one dimension with a periodic potential, a multiplicative two-scale potential, and a layered potential, and in two dimension with an additive two-scale potential and a checkboard potential are tested to demonstrate the robustness and efficiency of the proposed method. Moreover, first-order and second-order rates of convergence are observed in $H^1$ and $L^2$ norms, respectively.
	
	The rest of the paper is organized as follows. In \secref{sec:MsFEM}, we introduce a multiscale FEM for the semiclassical Schr\"{o}dinger equation and discuss the properties of the proposed method. Numerous numerical results are presented in \secref{sec:NumericalExamples}, including both one dimensional and two dimensional examples to demonstrate the efficiency of the proposed method. Conclusions and discussions are drawn in \secref{sec:Conclusion}.
	
	\section{ Multiscale finite element method for the semiclassical Schr\"{o}dinger equation} \label{sec:MsFEM}
	\noindent
	In this section, we construct the multiscale finite element basis functions based on an optimization approach, and use these basis functions as the approximation space in the Galerkin method to solve the Schr\"{o}dinger equation. A couple of properties of the proposed method are also given.
	
	\subsection{Construction of multiscale basis functions} \label{sec:OC}
	\noindent
	Recall that the Schr\"{o}dinger equation \eqref{eqn:Sch} is defined in $\mathbb{R}^d$. However, numerically we can only deal with
	bounded domains, thus artificial boundary condition is needed here. For the sake of brevity, we shall restrict ourselves to a bounded
	domain with prescribed boundary condition. In fact, artificial boundary condition can also be combined with the proposed approach which will
	be investigated in a subsequent work. Therefore we consider the following problem
	\begin{equation}
	\left\{
	\begin{aligned}
	i\epsilon\partial_t\psi^\epsilon&=-\frac{\epsilon^2}{2}\Delta\psi^\epsilon+v^{\epsilon}(\bx) \psi^\epsilon,\quad \bx\in D,\quad t\in \mathbb{R},\\
	\psi^\epsilon &\in H_{\textrm{P}}^{1}(D),\\
	\psi^\epsilon|_{t=0}&=\psi_{\textrm{in}}(\bx).
	\end{aligned}
	\right.
	\label{Sch}
	\end{equation}
	Here $ D = [0,1]^d $ is the spatial domain and $ H_{\textrm{P}}^{1}(D) = \{\psi |\psi\in H^1(D) \textrm{ and } \psi \textrm{ is periodic over D} \}$. $ \psi_{\textrm{in}}(\bx) $ is the initial data over $D$. Define the Hamiltonian operator $\mathcal{H}(\cdot)\equiv -\frac{\epsilon^2}{2}\Delta(\cdot)+v^{\epsilon}(\bx)(\cdot) $ and introduce the following energy notation $ ||\cdot||_V $ for Hamiltonian operator
	\begin{align}\label{eqn:energynorm}
		||\psi^\epsilon||_V=\frac12(\mathcal{H}\psi^\epsilon,\psi^\epsilon)=\frac12\int_{D}\frac{\epsilon^2}{2}|\nabla \psi^\epsilon|^2+v^{\epsilon}(\bx)|\psi^\epsilon|^2 \d\bx.
	\end{align}
	
	Note that \eqref{eqn:energynorm} does not define a norm since $v^{\epsilon}$ usually can be negative, and thus the bilinear form
	associated to this notation is not coercive, which is quite different from the case of elliptic equations. However, this does not
	mean that available approaches \cite{HouWu:97, BabuskaLipton:2011, Peterseim:2014, Owhadi:2017, hou2017sparse} cannot be used for
	the Schr\"{o}dinger equation. In fact, we shall utilize the similar idea to construct localized multiscale finite element basis
	functions on a coarse mesh by an optimization approach using the above energy notation $ ||\cdot||_V $ for the Hamiltonian operator.
	
	To construct such localized basis functions,  we first partition the physical domain $D$ into a set of regular coarse elements with mesh size $H$. For example, we divide $D$ into a set of
	non-overlapping triangles $\mathcal{T}_{H}=\cup\{K\}$, such that no vertex of one triangle lies in the interior of the edge of another triangle. In each element $K$, we define a set of nodal basis $\{\varphi_{j,K},j=1,...,k\}$ with $k$ being the number of nodes of the element. From now on, we neglect the subscript $K$ for notational convenience. The functions $\varphi_{i}(\bx)$ are called measurement functions, which are chosen as the characteristic functions on each coarse element in \cite{hou2017sparse,Owhadi:2017} and piecewise linear basis functions in \cite{Peterseim:2014}. In \cite{LiZhangCiCP:18,hou2018model}, it
	is found that the usage of nodal basis functions reduces the approximation error and thus the same setting is adopted in the current work.
	
	Let $\mathcal{N}$ denote the set of vertices of $\mathcal{T}_{H}$ (removing the repeated vertices due to the periodic boundary condition) and $N_{x}$ be the number of vertices. For every vertex $\bx_i\in\mathcal{N}$, let $\varphi_{i}(\bx)$ denote the corresponding nodal basis function, i.e., $\varphi_{i}(\bx_j)=\delta_{ij}$. Since all the nodal basis functions $\varphi_{i}(\bx)$ are continuous across the boundaries of the elements, we have
	\begin{align*}
		V^{H}=\{\varphi_{i}(\bx):i=1,...,N_x \}\subset H_{\textrm{P}}^{1}(D),
		%	\label{coarse_space}
	\end{align*}
	Then, we can solve optimization problems to obtain the multiscale basis functions.
	Specifically, let $\phi_i(\bx)$ be the minimizer of the following constrained optimization problem
	\begin{align}
		\phi_i& =\underset{\phi \in H_{\textrm{P}}^{1}(D)}{\arg\min} ||\phi||_V   \label{OC_SchGLBBasis_Obj}\\
		\text{s.t.}\ &\int_{D}\phi \varphi_{j} \d\bx= \delta_{i,j},\ \forall 1\leq j \leq N_{x}.  \label{OC_SchGLBBasis_Cons1}
	\end{align}
	The superscript $\epsilon$ is dropped for notational simplicity and the periodic boundary condition is incorporated into the above optimization problem through the solution space $H_{\textrm{P}}^{1}(D)$. With these multiscale finite element basis functions $ \{\phi_i(\bx)\}_{i=1}^{N_x} $, we can solve the Schr\"{o}dinger equation \eqref{Sch} using the Galerkin method.
	\begin{remark}
		Note that the energy notation $||\cdot||_V$ in \eqref{eqn:energynorm} does not define a norm. However, as long as $v^{\veps}(\bx)$ is
		bounded from below and the fine mesh size $h$ is small enough, the discrete problem of \eqref{OC_SchGLBBasis_Obj} - \eqref{OC_SchGLBBasis_Cons1}
		is convex and thus admits a unique solution; see \cite{hou2017sparse, LiZhangCiCP:18} for details.
	\end{remark}
	
	\subsection{Exponential decay of the multiscale finite element basis functions}
	\noindent
	We shall show that the multiscale basis functions $\{\phi_i(\bx)\}_{i=1}^{N_x}$ decay exponentially fast away from its associated vertex $x_i\in\mathcal{N}_{c}$ under certain conditions. This allows us to localize the basis functions to a relatively smaller domain and reduce the computational cost.
	
	In order to obtain localized basis functions, we first define a series of nodal patches $\{D_{\ell}\}$ associated with $\bx_i\in\mathcal{N}$ as
	\begin{align}
		D_{0}&:=\textrm{supp} \{ \varphi_{i} \} = \cup\{K\in\mathcal{T}_H | \bx_i \in K \}, \label{nodal_patch0}\\
		D_{\ell}&:=\cup\{K\in\mathcal{T}_H | K\cap \overline{D_{\ell-1}} \neq \emptyset\}, \quad  \ell=1,2,\cdots.
		\label{nodal_patchl}
	\end{align}	
	\begin{assumption} \label{CoarseMeshResolution}
		We assume that the potential term $v^{\epsilon}(\bx)$ is bounded, i.e.,  $V_0 := ||v^{\epsilon}(\bx)||_{L^\infty (D)}< +\infty$ and the   mesh size $H$ of $\mathcal{T}_{H}$ satisfies
		\begin{equation}
		\label{eqn:meshcondition}
		\sqrt{V_0}H/\epsilon\lesssim 1,
		\end{equation}
		where $\lesssim$ means bounded from above by a constant.
	\end{assumption}
	\noindent Under this assumption, many typical potentials in the Schr\"{o}dinger equation \eqref{Sch} can be treated as a perturbation to the kinetic operator. Thus, they can be computed using our method. Then, we can show that the multiscale finite element basis functions have the exponentially decaying property.
	\begin{proposition}[Exponentially decaying property]\label{ExponentialDecay}
		Under the resolution condition of the coarse mesh, i.e., \eqref{eqn:meshcondition}, there exist constants $C>0$ and $0<\beta<1$ independent of $H$, such that
		\begin{equation}\label{eqn:exponentialdecay}
		||\nabla \phi_i(\bx) ||_{D\backslash D_{\ell}}\leq C \beta^{\ell} ||\nabla \phi_i(\bx) ||_{D},
		\end{equation}
		for any $i=1, 2, ..., N_x$.
	\end{proposition}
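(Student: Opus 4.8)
The plan is to adapt the localization machinery developed for coercive elliptic operators in \cite{Peterseim:2014, Owhadi:2017, hou2017sparse}, the essential new ingredient being a coercivity estimate that compensates for the indefiniteness of $||\cdot||_V$ by means of the mesh resolution condition \eqref{eqn:meshcondition}. Write $a(\phi,\psi)=\int_{D}\frac{\epsilon^2}{2}\nabla\phi\cdot\nabla\psi+v^{\epsilon}(\bx)\phi\psi\,\d\bx$ for the symmetric bilinear form associated with $\mathcal{H}$, so that $||\phi||_V=\frac12 a(\phi,\phi)$. First I would record the Euler--Lagrange characterization of the constrained minimizer: introducing Lagrange multipliers for the constraints \eqref{OC_SchGLBBasis_Cons1}, the minimizer $\phi_i$ of \eqref{OC_SchGLBBasis_Obj} is $a$-orthogonal to the kernel of the measurements, i.e. $a(\phi_i,\psi)=0$ for every $\psi$ in $\Psi:=\{\psi\in H_{\textrm{P}}^{1}(D):\int_D\psi\varphi_j\,\d\bx=0\ \text{for all }j\}$. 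This orthogonality, together with the fact that $\int_D\phi_i\varphi_j\,\d\bx=0$ for every node $j\neq i$, is the structural input driving the entire decay argument.

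The second step is to turn the energy notation into a genuine norm on $\Psi$. Let $\Pi$ denote the quasi-interpolation onto the measurement space determined by $\{\varphi_j\}$, so that $\Psi=\ker\Pi$ and $\Pi$ enjoys the standard local $H^1$-stability and $O(H)$ approximation bounds $||(1-\Pi)\psi||_{L^2(K)}\lesssim H\,||\nabla\psi||_{L^2(\omega_K)}$ on each element $K$ and its patch $\omega_K$. For $\psi\in\Psi$ this gives $||\psi||_{L^2(D)}^2\lesssim H^2||\nabla\psi||_{L^2(D)}^2$, which combined with $\int_K v^{\epsilon}|\psi|^2\geq -V_0||\psi||_{L^2(K)}^2$ yields $a(\psi,\psi)\geq\bigl(\tfrac{\epsilon^2}{2}-C V_0 H^2\bigr)||\nabla\psi||_{L^2(D)}^2$. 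Under \eqref{eqn:meshcondition}, with the implied constant taken small enough, the prefactor is bounded below by $c\,\epsilon^2>0$, so $a(\cdot,\cdot)$ is coercive on $\Psi$ and $||\cdot||_V$ is, up to constants, equivalent to $\epsilon\,||\nabla\cdot||_{L^2(D)}$ there; the same Poincaré-type estimate will also tame the indefinite cross terms appearing below.

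With coercivity in hand I would run the Caccioppoli-type iteration of \cite{Peterseim:2014}. For a fixed number $m$ of layers, chosen according to the support width of $\Pi$, pick a cutoff $\eta$ that vanishes on $D_{\ell-m}$, equals $1$ on $D\setminus D_\ell$, and satisfies $|\nabla\eta|\lesssim 1/(mH)$. Since the nodes outside the central patch carry vanishing measurements of $\phi_i$, the function $(1-\Pi)(\eta\phi_i)$ lies in $\Psi$, and the orthogonality relation gives $a(\phi_i,\eta\phi_i)=a(\phi_i,\Pi(\eta\phi_i))$. Expanding both sides, the kinetic contribution $\int\frac{\epsilon^2}{2}\eta|\nabla\phi_i|^2$ together with the coercivity estimate reproduces the leading quantity $c\,\epsilon^2||\nabla\phi_i||^2_{D\setminus D_\ell}$, while every remaining term — the commutator $\int\frac{\epsilon^2}{2}\phi_i\,\nabla\eta\cdot\nabla\phi_i$, the potential contributions, and the right-hand side involving $\Pi(\eta\phi_i)$ — is supported in the annulus $D_\ell\setminus D_{\ell-m}$ and is bounded, using the stability and approximation properties of $\Pi$ and the local Poincaré inequality (the factor $H$ cancelling the $1/H$ from $\nabla\eta$), by $C\,\epsilon^2||\nabla\phi_i||^2_{D_\ell\setminus D_{\ell-m}}$. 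Writing the annulus energy as $||\nabla\phi_i||^2_{D\setminus D_{\ell-m}}-||\nabla\phi_i||^2_{D\setminus D_\ell}$ and rearranging yields the one-step contraction $||\nabla\phi_i||^2_{D\setminus D_\ell}\leq\gamma\,||\nabla\phi_i||^2_{D\setminus D_{\ell-m}}$ with $\gamma=C/(C+c)\in(0,1)$; iterating over $\lfloor\ell/m\rfloor$ blocks then gives \eqref{eqn:exponentialdecay} with $\beta=\gamma^{1/(2m)}$ and $C$ depending only on $m$ and the constants above.

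I expect the main obstacle to be the interplay between the indefinite potential and the cutoff mechanism. Unlike the coercive elliptic case, the contraction cannot be read off from the energy alone: one must verify that the potential contributions $\int_D v^{\epsilon}\phi_i\,\psi$ and $\int_D v^{\epsilon}\phi_i\,\Pi(\eta\phi_i)$ generated in the expansion are genuinely absorbed by the mesh condition rather than spoiling the telescoping identity, and that the lower coercivity bound survives restriction to the far region $D\setminus D_\ell$. Tracking the constants in the Poincaré, stability, and approximation estimates explicitly through \eqref{eqn:meshcondition} — so that $\beta$ and $C$ are truly independent of $H$ and $\epsilon$ — is the delicate bookkeeping at the heart of the argument.
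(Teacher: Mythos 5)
Your proposal coincides with the approach the paper itself takes: the paper offers no proof of Proposition~\ref{ExponentialDecay}, deferring it to the companion work \cite{ChenMaZhang:prep} and describing the argument only as an iterative Caccioppoli-type argument \cite{Peterseim:2014, LiZhangCiCP:18} combined with refined estimates with respect to $\epsilon$ --- which is precisely your plan of Euler--Lagrange orthogonality of the minimizer, coercivity on the measurement kernel obtained from the quasi-interpolation bound together with the resolution condition \eqref{eqn:meshcondition} (with the implied constant taken small enough, as you correctly note), followed by the cutoff-and-iterate contraction. The difficulties you single out --- absorbing the indefinite potential through the $\mathcal{O}(H)$ Poincar\'e-type estimate on the annuli and keeping the constants independent of $H$ and $\epsilon$ --- are exactly the ``refined estimates with respect to $\epsilon$'' the authors allude to, so your outline matches the paper's announced proof strategy.
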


Proof of \eqref{eqn:exponentialdecay} will be given in \cite{ChenMaZhang:prep}. The main idea is to combine an iterative Caccioppoli-type argument \cite{Peterseim:2014, LiZhangCiCP:18} and some refined estimates with respect to $\veps$. To demonstrate the exponentially decaying property of multiscale basis functions, we use the multiscale basis function centered
at $x=1/2$ in \textbf{Example \ref{example2}} for a sequence of $\veps$ from $1/40$ to $1/160$ for illustration. 
The left figure plots $\nabla\phi / ||\phi||_{L_2}$ with respect to the distance to $x=1/2$, which shows both the exponential decay and the $\veps$
dependence with respect to the distance. The right figure plots $ E_{\textrm{relative}}=\dfrac{||\nabla \phi ||_{D}-||\nabla \phi ||_{D_{\ell}}}{\max(||\nabla \phi ||_{D}-||\nabla \phi ||_{D_{\ell}})}$ with respect to the patch size $\ell$, which shows the decay rate of $E_{\textrm{relative}}$ with respect
to $\ell$ is independent of $\veps$, and thus the estimate in \eqref{eqn:exponentialdecay} is sharp. 
$||\phi||_{L_2}$ and $\max(||\nabla \phi ||_{D}-||\nabla \phi ||_{D_{\ell}})$ in the denominators are used such that
$\nabla\phi / ||\phi||_{L_2}$ and $ E_{\textrm{relative}}$ with respect to $\veps$ are in a similar range of magnitudes.
	\begin{figure}[H]
		\centering
		\begin{subfigure}{0.39\textwidth}
			\centering
			\includegraphics[width=\textwidth]{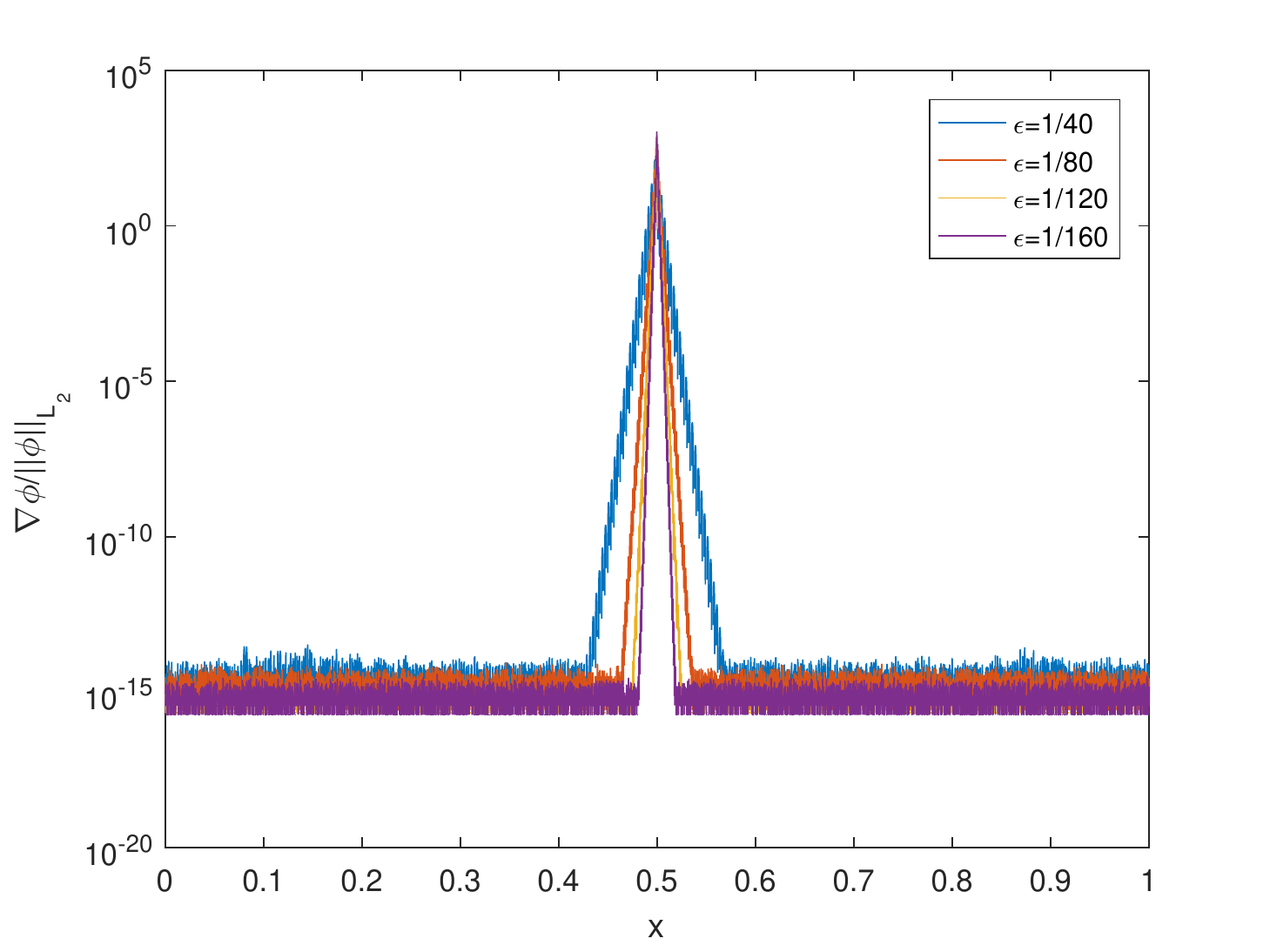}
			% \caption{STD}
			\label{fig:gradient}
		\end{subfigure}
		\begin{subfigure}{0.39\textwidth}
			\centering
			\includegraphics[width=\textwidth]{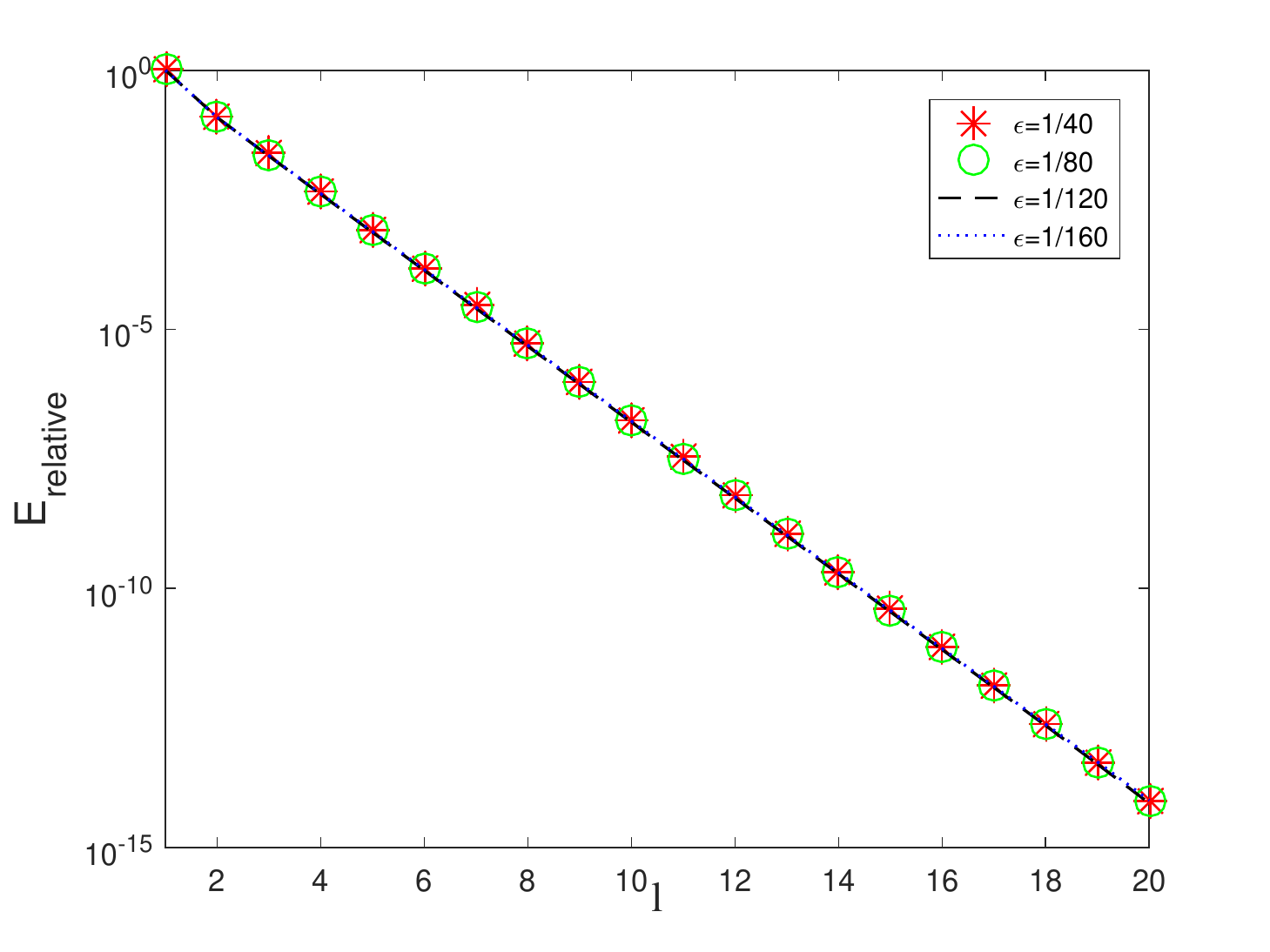}
			%  \caption{Mean}
			\label{fig:exponential}
		\end{subfigure}
		\caption{Exponentially decaying properties of the multiscale basis function in \textbf{Example \ref{example2}} for a sequence of $\veps$
from $1/40$ to $1/160$. Left: $\nabla\phi / ||\phi||_{L_2}$ with respect to the distance to $x=1/2$; Right: $ E_{\textrm{relative}}=\dfrac{||\nabla \phi ||_{D}-||\nabla \phi ||_{D_{\ell}}}{\max(||\nabla \phi ||_{D}-||\nabla \phi ||_{D_{\ell}})}$ with respect to the patch size $\ell$.
It is observed that the exponentially decaying property of $E_{\textrm{relative}}$ with respect to $\ell$ is independent of $\veps$,
which implies the sharpness of \eqref{eqn:exponentialdecay}.}
		\label{exponential_decay}
	\end{figure}
		
The exponential decay of the basis functions enables us to localize the support sets of
	the basis functions $\{\phi_i(\bx)\}_{i=1}^{N_x}$, so that the corresponding stiffness matrix is sparse and the
	computational cost is reduced. In practice, we define a modified constrained optimization problem as follows
	\begin{align}
		\phi_i^{\textrm{loc}}& =\underset{\phi \in H_{\textrm{P}}^{1}(D)}{\arg\min} ||\phi||_V   \label{OC_SchBasis_Obj}\\
		\text{s.t.}\ &\int_{D_{l^*}}\phi \varphi_{j} \d\bx= \delta_{i,j},\ \forall 1\leq j \leq N_{x}, \label{OC_SchBasis_Cons1}\\
		&\phi(\bx)= 0, \ x\in D\backslash D_{l^*},\label{OC_SchBasis_Cons2}
	\end{align}
	where $D_{l^*}$ is the support set of the localized multiscale basis function $\phi_i^{\textrm{loc}}(\bx)$ and the choice of the integer $l^*$ depends on the decaying speed of $\phi_i^{\textrm{loc}}(\bx)$. In \eqref{OC_SchBasis_Cons1} and \eqref{OC_SchBasis_Cons2}, we have used the fact that $\phi_i(\bx)$ has the exponentially decaying property so that we can localize the support set of $\phi_i(\bx)$ to a smaller domain $D_{l^*}$. In numerical experiments, we find that a small integer $l^*\sim \log(L/H)$  will give accurate results, where $L$ is the diameter of domain $D$. Moreover, the optimization problem \eqref{OC_SchBasis_Obj} - \eqref{OC_SchBasis_Cons2} can be solved in parallel. Therefore, the exponentially decaying property significantly reduces our computational cost in constructing basis functions and computing the solution of the Schr\"{o}dinger equation \eqref{Sch}.
	
	\subsection{Time marching} \label{sec:NumCom}
	\noindent	
	With the localized multiscale finite element basis functions $ \{\phi_i^{\textrm{loc}}(\bx)\}_{i=1}^{N_x}$, we can approximate the wavefunction
	by $ \psi^\epsilon(\bx,t)=\sum_{i=1}^{N_x}c_i(t)\phi_i^{\textrm{loc}}(\bx) $ using the Galerkin method. Therefore, the coefficients $ c_i(t),i=1,...,N_x $
	satisfies a system of ordinary differential equations (ODEs).
	
	In details, we use localized multiscale finite element basis functions for both the test space and the trial space in the weak formulation for \eqref{Sch}
	\begin{align}
		\left(i\epsilon\partial_t\sum_{i=1}^{N_x}c_i(t)\phi_i^{\textrm{loc}}(\bx),\phi_j^{\textrm{loc}}\right)=\left(\mathcal{H}\phi_i^{\textrm{loc}},\phi_j^{\textrm{loc}}\right),\quad \bx\in D,t\in \mathbb{R},j=1,\cdots,N_x.
		\label{appro_Sch}
	\end{align}
	Let
	\begin{align*}
		S_{i,j}&=\int_{D}\nabla \phi_i^{\textrm{loc}} \cdot \nabla \phi_j^{\textrm{loc}} \d\bx, \\
		M_{i,j}&=\int_{D} \phi_i^{\textrm{loc}} \phi_j^{\textrm{loc}} \d\bx, \\
		V_{i,j}&=\int_{D}\phi_i^{\textrm{loc}} v^{\epsilon}(\bx) \phi_j^{\textrm{loc}} \d\bx.
	\end{align*}
	Then, we can formulate \eqref{appro_Sch} as follows
	\begin{align}\label{eqn:Schmatrix}
		i\epsilon M\bc_t=(\frac{\epsilon^2}{2}S+V)\bc,
	\end{align}
	in which $ \bc=(c_1(t),c_2(t),...,c_{N_x}(t))^T $ and $ \bc_t=(\partial_tc_1(t),\partial_tc_2(t),...,\partial_tc_{N_x}(t))^T $.
	For illustration, we further rewrite \eqref{eqn:Schmatrix} as
	\begin{align}\label{eqn:Schcompact}
		\bc_t=\frac{1}{i\epsilon} B\bc
	\end{align}
	with $ B=M^{-1}A $ and $A=\frac{\epsilon^2}{2}S+V$. Note that both $M$ and $A$ are symmetric, so the eigenvalues of $B$ are real.
	However, since $B$ is not symmetric in general, it admits an eigendecomposition or a Jordan canonical form.
	
	\subsubsection{Eigendecomposition}
	\noindent
	In this case, $ B $ can be factorized as $ B=P\Lambda P^{-1} $, where $\Lambda $ is the diagonal matrix and $P$ is an invertible matrix.
	Substituting this form into \eqref{eqn:Schcompact} yields
	\begin{align*}
		\bc_t&=\frac{1}{i\epsilon}P\Lambda P^{-1}\bc,
	\end{align*}
	which can be further rewritten as
	\begin{align}\label{ODEs1}
		\tilde{\bc}_t&=\frac{1}{i\epsilon}\Lambda \tilde{\bc}
	\end{align}
	with $ \tilde{\bc}=P^{-1}\bc $. Since $\Lambda$ is diagonal, \eqref{ODEs1} can be solved explicitly.
	
	For example, consider the temporal interval of interest to be $[0, 1]$ and denote $ \tilde{\bc}_{n}=\tilde{\bc}(t_n) $, $ \bc_{n}=\bc(t_n)=(c_1(t_n),c_2(t_n),...,c_{N_x}(t_n))^T $ and $t_n = nk =1$, in which $ k$ denotes the time stepsize. We have
	\begin{align*}
		\tilde{\bc}_{n}&=\exp{\left(\int_{0}^{1}\frac{1}{i\epsilon}\Lambda dt\right)}\tilde{\bc}_0=\exp{\left(\frac{1}{i\epsilon}\Lambda\right)}\tilde{\bc}_0,
	\end{align*}
	and
	\begin{align}
		\bc_n&=P\exp{\left(\frac{1}{i\epsilon}\Lambda\right)}P^{-1}\bc^{\textrm{in}}.
		\label{eig_analytic}
	\end{align}
	Here $ \tilde \bc_0=P^{-1}\bc^{\textrm{in}} $ and $ \bc^{\textrm{in}}=(c^{\textrm{in}}_1,c^{\textrm{in}}_2,...,c^{\textrm{in}}_{N_x})^T $.
	$ \bc^{\textrm{in}}$ is obtained from the initial data $ \psi_{\textrm{in}}(\bx)=\sum_{i=1}^{N_x}c^{\textrm{in}}_i\phi_i^{\textrm{loc}}(\bx)$ using Galerkin projection, i.e.,
	\begin{align}\label{eqn:icGalerkin}
		\left(\psi_{\textrm{in}},\phi_j^{\textrm{loc}}\right) = \left(\sum_{i=1}^{N_x} c^{\textrm{in}}_i\phi_i^{\textrm{loc}},\phi_j^{\textrm{loc}}\right),\quad j=1,\cdots, N_x.
	\end{align}
	\eqref{eqn:icGalerkin} can be written in a compact form as $M\bc^{\textrm{in}} = \boldsymbol{\psi}^\textrm{in}$ with $\boldsymbol{\psi}^\textrm{in} = \left((\psi_{\textrm{in}},\phi_1^{\textrm{loc}}),\cdots, (\psi_{\textrm{in}},\phi_{N_x}^{\textrm{loc}})\right)^T$. Solving this linear system of equations
	produces $\bc^{\textrm{in}}$.
	
	\subsubsection{Jordan canonical form}
	\noindent	
	If $B$ is not diagonalizable, then there exists an invertible matrix $P$, such that $B$ can be factorized as
	$B=PJP^{-1} $, where $ J $ has the block diagonal form
	$$J=
	\left(
	\begin{matrix}
	J_1      & 0      & \cdots & 0      \\
	0      & J_2     & \cdots & 0      \\
	\vdots & \vdots & \ddots & \vdots \\
	0      & 0      & \cdots & J_s      \\
	\end{matrix}
	\right)
	$$
	with $J_i$ the $i-$th Jordan block associated to the corresponding eigenvalue $ \lambda_i$
	$$J_i=
	\left(
	\begin{matrix}
	\lambda_i      & 1      & \cdots & 0      \\
	0      & \lambda_i     & \ddots & \vdots     \\
	\vdots & \vdots & \ddots & 1 \\
	0      & 0      & \cdots & \lambda_i     \\
	\end{matrix}
	\right).
	$$
	For $i=1,\cdots, s$, $ r_i $, the multiplicity of $\lambda_i$ satisfies $ r_1+r_2+...+r_s=N_x $.
	
	We now proceed with explicit time marching. Similar to \eqref{ODEs1}, we have
	\begin{align}
		\tilde{\bc}_t&=\frac{1}{i\epsilon}J \tilde{\bc} \label{ODEs2}
	\end{align}
	Since $J$ is almost diagonal, we can solve the system of ODEs \eqref{ODEs2} in a similar fashion
	but with a bit more complexity.
	
	Take the $j-$th block $ J_j $ for example. According to \eqref{ODEs2}, we have for the $j-$th block
	\begin{equation}
	\left(
	\begin{array}{cccc}
	\partial_t\tilde c_{j_1}(t)\\
	\partial_t\tilde c_{j_2}(t)\\
	... \\
	\partial_t\tilde c_{j_{r_j}(t)}
	\end{array}
	\right)
	=\frac{1}{i\epsilon}
	\left(
	\begin{matrix}
	\lambda_j      & 1      & \cdots & 0      \\
	0      & \lambda_j     & \ddots & \vdots     \\
	\vdots & \vdots & \ddots & 1 \\
	0      & 0      & \cdots & \lambda_j     \\
	\end{matrix}
	\right)
	\left(
	\begin{array}{cccc}
	\tilde c_{j_1}(t)\\
	\tilde c_{j_2}(t)\\
	... \\
	\tilde c_{j_{r_j}(t)}
	\end{array}
	\right).
	\label{blockode}
	\end{equation}
	We can solve \eqref{blockode} in a backward manner explicitly by first solving for $\tilde c_{j_{r_j}(t)}$, then $\tilde c_{j_{r_{j-1}}(t)}$, $\cdots$, until $\tilde c_{j_{r_1}(t)}$. Over the time interval $[0, t_n]$, this procedure results
	\begin{equation}
	\left(
	\begin{array}{cccc}
	\tilde c_{j_1}(t_n)\\
	\tilde c_{j_2}(t_n)\\
	... \\
	\tilde c_{j_{r_j}(t_n)}
	\end{array}
	\right)
	=e^{\frac{1}{i\epsilon}\lambda_j t_n}
	\left(
	\begin{matrix}
	1      & \frac{t_n}{i\epsilon}    & \cdots &(\frac{t_n}{i\epsilon})^{j-1}\frac{1}{(j-1)!}   \\
	0      & 1    & \ddots & \vdots     \\
	\vdots & \vdots & \ddots & \frac{t_n}{i\epsilon}  \\
	0      & 0      & \cdots & 1    \\
	\end{matrix}
	\right)
	\left(
	\begin{array}{cccc}
	\tilde c_{j_1}(0)\\
	\tilde c_{j_2}(0)\\
	... \\
	\tilde c_{j_{r_j}(0)}
	\end{array}
	\right)
	\label{blockode_result}
	\end{equation}
	Repeating this procedure for each Jordan block,	we can obtain  $ \tilde{\bc}_{n} $ and $ \bc_{n}=P\tilde{\bc}_{n} $.
	
	There are a couple of properties of the proposed method. The first is the gauge invariance.
	If a discrete gauge transformation $\boldsymbol{d}_n = \bc_n \exp{\left(\frac{i}{\veps}\omega t_n \right)}$
	is introduced to \eqref{eqn:Schcompact}, it is easy to check such a transformation commutes with $B$. Therefore,
	the current method is gauge-invariant.
	
	\begin{proposition}[Gauge invariance]\label{prop:gaugeinvariance}
		The multiscale finite element method is gauge-invariant.
	\end{proposition}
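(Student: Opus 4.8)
The plan is to establish the gauge invariance entirely at the level of the discrete evolution \eqref{eqn:Schcompact}, by tracking how a constant shift of the potential enters the matrix $B$ and then showing that the induced change of the propagator is exactly a global time-dependent phase. First I would recall the continuous gauge invariance so as to fix the statement to be matched: if $\psi^\epsilon$ solves \eqref{Sch} with potential $v^{\epsilon}$, then $\psi^\epsilon e^{i\omega t/\epsilon}$ solves the same equation with the shifted potential $v^{\epsilon}-\omega$, and since $|\psi^\epsilon e^{i\omega t/\epsilon}| = |\psi^\epsilon|$, all physical observables are unchanged. The discrete method deserves to be called gauge-invariant precisely when its propagator reproduces this phase relation.

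Next I would translate a constant shift of the potential into the algebra of \secref{sec:NumCom}. For a fixed multiscale basis $\{\phi_i^{\textrm{loc}}\}$, replacing $v^{\epsilon}$ by $v^{\epsilon}-\omega$ changes only the potential matrix, $V_{i,j} = \int_D \phi_i^{\textrm{loc}} v^{\epsilon} \phi_j^{\textrm{loc}}\,\d\bx \mapsto V_{i,j} - \omega M_{i,j}$, because $\int_D \phi_i^{\textrm{loc}}\phi_j^{\textrm{loc}}\,\d\bx = M_{i,j}$. Hence $A = \frac{\epsilon^2}{2}S + V \mapsto A - \omega M$, and $B = M^{-1}A \mapsto M^{-1}(A-\omega M) = B - \omega I$. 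Thus the sole effect of the shift is to subtract the scalar matrix $\omega I$ from $B$.

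The key step is then the commutation observation quoted before the statement. Because $\omega I$ is a multiple of the identity it commutes with $B$, so the matrix exponentials factor,
\[
\exp\!\left(\frac{1}{i\epsilon}(B-\omega I)t_n\right) = \exp\!\left(\frac{1}{i\epsilon}Bt_n\right)\exp\!\left(-\frac{1}{i\epsilon}\omega t_n\right) = \exp\!\left(\frac{1}{i\epsilon}Bt_n\right)e^{i\omega t_n/\epsilon}.
\]
Applying both propagators to the same initial vector $\bc^{\textrm{in}}$ and using \eqref{eig_analytic}, the solution $\boldsymbol{d}_n$ of the shifted system satisfies $\boldsymbol{d}_n = \bc_n\, e^{i\omega t_n/\epsilon}$, which is exactly the discrete gauge transformation. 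Equivalently, one checks directly that $\boldsymbol{d}(t) = \bc(t)e^{i\omega t/\epsilon}$ solves $\boldsymbol{d}_t = \frac{1}{i\epsilon}(B-\omega I)\boldsymbol{d}$ by differentiating and substituting \eqref{eqn:Schcompact}. Since the reconstructed wavefunctions differ only by the global phase $e^{i\omega t_n/\epsilon}$, their moduli and all quadratic observables coincide, which is the asserted gauge invariance.

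The step I expect to be the genuine obstacle, and the one I would be most careful to phrase correctly, is the hidden assumption that the multiscale basis is held fixed across the shift. The basis functions are defined in \eqref{OC_SchGLBBasis_Obj}--\eqref{OC_SchGLBBasis_Cons1} as minimizers of $\|\phi\|_V$, and under $v^{\epsilon}\mapsto v^{\epsilon}-\omega$ this energy acquires the extra term $-\frac{\omega}{2}\|\phi\|_{L^2}^2$; since the minimizer of the constrained quadratic program depends on the full quadratic form, the basis generically changes, so a literal ``recompute everything with the shifted potential'' reading would perturb $M$, $S$ and $V$ simultaneously and destroy the clean relation $B \mapsto B - \omega I$. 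I would therefore state and prove the proposition as gauge invariance of the discrete evolution in a \emph{fixed} multiscale space: a constant shift enters the assembled operator only through $\omega M$, and the propagator carries it as a global phase. This is the sense in which $\boldsymbol{d}_n = \bc_n e^{i\omega t_n/\epsilon}$ holds exactly, and it is precisely the commutation of $\omega I$ with $B$ that makes it work.
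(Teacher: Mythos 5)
Your proposal is correct and is essentially the paper's own argument: the paper's entire justification is the remark preceding the proposition that the discrete gauge transformation $\boldsymbol{d}_n = \bc_n \exp\left(\frac{i}{\veps}\omega t_n\right)$, being multiplication by a scalar phase, commutes with $B$, which is exactly the commutation fact at the heart of your factorization of the propagator. Your additional care --- deriving $B \mapsto B - \omega I$ from the shift $v^{\epsilon} \mapsto v^{\epsilon} - \omega$ with the multiscale basis held fixed, and noting that the basis must indeed be held fixed since the minimizers of $\|\cdot\|_V$ in \eqref{OC_SchGLBBasis_Obj}--\eqref{OC_SchGLBBasis_Cons1} depend on the potential --- supplies precisely the details the paper leaves implicit.
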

	
	\begin{proposition}[Conservation of total mass and total energy]\label{prop:conservation}
		The multiscale finite element method conserves both total mass and total energy, i.e.,
		\begin{align}
			||\psi^{n+1}||_{L^2} & = ||\psi^{n}||_{L^2}, \quad \forall n\ge 0, \label{eqn:mass}\\
			||\psi^{n+1}||_V & = ||\psi^{n}||_V, \quad \forall n\ge 0. \label{eqn:energy1}
		\end{align}
	\end{proposition}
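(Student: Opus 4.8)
The plan is to reduce both conservation laws to algebraic identities for the coefficient vector $\bc$ and then exploit the fact that the time marching in Section~\ref{sec:NumCom} solves the semi-discrete system \eqref{eqn:Schmatrix} \emph{exactly}. First I would write the two conserved quantities as quadratic forms. Since the basis $\{\phi_i^{\textrm{loc}}\}$ is real-valued, expanding $\psi^\epsilon=\sum_i c_i\phi_i^{\textrm{loc}}$ gives $||\psi^\epsilon||_{L^2}^2=\bc^* M\bc$ and, from \eqref{eqn:energynorm}, $||\psi^\epsilon||_V=\tfrac12\,\bc^* A\bc$ with $A=\tfrac{\epsilon^2}{2}S+V$, where $\bc^*=\overline{\bc}^{T}$ is the conjugate transpose. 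Both forms are real because $M$ and $A$ are symmetric (hence Hermitian over $\mathbb{R}$), so it is meaningful to speak of conserving them.

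Second, because the eigendecomposition / Jordan-form procedure produces $\bc_n=\bc(t_n)$ with $\bc(t)$ the exact solution of the autonomous system $i\epsilon M\bc_t=A\bc$, it suffices to prove the continuous-time statements $\frac{\d}{\d t}(\bc^* M\bc)=0$ and $\frac{\d}{\d t}(\bc^* A\bc)=0$; evaluating at $t=t_n$ and $t=t_{n+1}$ then yields \eqref{eqn:mass} and \eqref{eqn:energy1}. I would differentiate, substitute $M\bc_t=\frac{1}{i\epsilon}A\bc$ together with its conjugate transpose $\bc_t^* M=-\frac{1}{i\epsilon}\bc^* A$ (using $A^*=A$, $M^*=M$, and $\overline{i\epsilon}=-i\epsilon$), and watch the two cross terms cancel. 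For the mass, $\frac{\d}{\d t}(\bc^* M\bc)=\bc_t^* M\bc+\bc^* M\bc_t=-\frac{1}{i\epsilon}\bc^* A\bc+\frac{1}{i\epsilon}\bc^* A\bc=0$. For the energy, the same substitution turns $\frac{\d}{\d t}(\bc^* A\bc)$ into $-\frac{1}{i\epsilon}\bc^* A M^{-1}A\bc+\frac{1}{i\epsilon}\bc^* A M^{-1}A\bc=0$.

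A more structural way to phrase the same computation, which I would record as a remark, is that $B=M^{-1}A$ is self-adjoint with respect to the weighted inner product $\langle x,y\rangle_M=x^* My$, since $\langle Bx,y\rangle_M=x^* Ay=\langle x,By\rangle_M$. Hence $\frac{1}{i\epsilon}B$ is $M$-skew-adjoint and the exact propagator $\exp\!\big(\frac{t}{i\epsilon}B\big)$ is $M$-unitary: mass conservation is precisely this unitarity, and energy conservation follows because the propagator commutes with $B$ while $||\psi^\epsilon||_V=\tfrac12\langle\bc,B\bc\rangle_M$. This viewpoint also accounts for the eigenvalue remarks made just before the proposition.

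The computation is routine; the only care-points are the bookkeeping with complex conjugation (the coefficients $\bc$ are complex while $M,A$ are real symmetric) and, more importantly, the observation that \emph{exact} conservation at the discrete level hinges on integrating the ODE system exactly rather than with a generic time-stepping scheme. I therefore expect the main, and only minor, obstacle to be making precise that the eigendecomposition and Jordan-form marching reproduce $\bc(t_n)$ with no temporal error, so that the continuous-time invariants are inherited verbatim by the discrete iterates.
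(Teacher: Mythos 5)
Your proof is correct and is essentially the paper's argument: both reduce the claims to the quadratic forms $\bc^{*}M\bc$ and $\tfrac12\,\bc^{*}A\bc$ and use the fact that the eigendecomposition/Jordan-form marching applies the \emph{exact} propagator $\exp\bigl(\tfrac{k}{i\veps}M^{-1}A\bigr)$, with the real symmetry of $M$ and $A$ driving the cancellation. Where the paper cancels directly at the discrete level via the conjugation identities $\exp\bigl(-\tfrac{k}{i\veps}AM^{-1}\bigr)\,M\,\exp\bigl(\tfrac{k}{i\veps}M^{-1}A\bigr)=M$ and the analogous one with $A$, your continuous-time differentiation plus the $M$-unitarity remark is an equivalent repackaging of exactly those identities, so the two proofs coincide in substance.
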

	
	\begin{proof}
		By definition, $ \psi_{n}=\sum_{i=1}^{N_x}c_i(t_{n})\phi^{\textrm{loc}}_i$ and $ \psi_{n+1}=\sum_{i=1}^{N_x}c_i(t_{n+1})\phi^{\textrm{loc}}_i$.
		Then, we have
		\begin{align}
			||\psi^{n+1}||_{L^2}^2 & = \sum_{1\leq i,j\leq N_x} c^*_i(t_{n+1}) c_j(t_{n+1}) \left(\phi^{\textrm{loc}}_i,\phi^{\textrm{loc}}_j\right) = \bc_{n+1}^* M \bc_{n+1},\label{eqn:density1}\\
			||\psi^{n}||_{L^2}^2 & = \sum_{1\leq i,j\leq N_x} c^*_i(t_{n}) c_j(t_{n}) \left(\phi^{\textrm{loc}}_i,\phi^{\textrm{loc}}_j\right) = \bc_{n}^* M \bc_{n}.\label{eqn:density2}
		\end{align}
		To avoid the detailed discussion of using eigendecomposition or Jordan canonical form, we use \eqref{eqn:Schcompact} and have
		\begin{equation}
		\bc_{n+1} = \exp{\left(\frac{k}{i\veps}M^{-1}A\right)}\bc_n.\label{eqn:density3}
		\end{equation}
		Substituting \eqref{eqn:density3} into \eqref{eqn:density1} yields
		\begin{align*}
			\bc_{n+1}^* M \bc_{n+1} & = \bc_{n}^* \exp{\left(-\frac{k}{i\veps}AM^{-1}\right)} M \exp{\left(\frac{k}{i\veps}M^{-1}A\right)} \bc_{n}
			= \bc_{n}^* M \bc_{n},
		\end{align*}
		which validates the conservation of total mass.
		Here we have used the facts that both $M$ and $A$ are real symmetric matrices and
		\[
		\exp{\left(-\frac{k}{i\veps}AM^{-1}\right)} M \exp{\left(\frac{k}{i\veps}M^{-1}A\right)} = M.
		\]
		
		Similarly, for total energy, we have
		\begin{align*}
			||\psi^{n+1}||_{V}^2 & = \frac12\sum_{1\leq i,j\leq N_x} c^*_i(t_{n+1}) c_j(t_{n+1}) \left(\mathcal{H}\phi^{\textrm{loc}}_i,\phi^{\textrm{loc}}_j\right) = \frac12\bc_{n+1}^* A \bc_{n+1}, \\
			||\psi^{n}||_{V}^2 & = \frac12\sum_{1\leq i,j\leq N_x} c^*_i(t_{n}) c_j(t_{n}) \left(\mathcal{H}\phi^{\textrm{loc}}_i,\phi^{\textrm{loc}}_j\right) = \frac12\bc_{n}^* A \bc_{n}.
		\end{align*}
		Using the fact that
		\[
		\exp{\left(-\frac{k}{i\veps}AM^{-1}\right)} A \exp{\left(\frac{k}{i\veps}M^{-1}A\right)} = A,
		\]
		we have
		\begin{align*}
			\bc_{n+1}^* A \bc_{n+1}  = \bc_{n}^*\exp{\left(-\frac{k}{i\veps}AM^{-1}\right)} A \exp{\left(\frac{k}{i\veps}M^{-1}A\right)}\bc_{n}
			= \bc_{n}^* A \bc_{n},
		\end{align*}
		which completes the proof the conservation of total energy.
	\end{proof}
	
	\section{Numerical examples}\label{sec:NumericalExamples}
	\noindent	
	In this section, we will test the proposed method for a number of examples in one dimension with a periodic potential, a multiplicative two-scale potential, and a layered potential, and in two dimension with an additive two-scale potential
	and a checkboard potential. Note that \textbf{Example \ref{example1}} and \textbf{Example \ref{example4}} can be solved
	by the approaches in \cite{Huangetal:2007, DelgadilloLuYang:2016}, while \textbf{Example \ref{example2}},
	\textbf{Example \ref{example3}}, and \textbf{Example \ref{example5}} cannot.
	In all cases, we denote $ \psi^{\epsilon}_{\textrm{exact}} $ the reference solution obtained by the
	Crank-Nicolson scheme in time with a very small stepsize and the standard FEM in space with a very small meshsize.
	We denote $ \psi^{\epsilon}_{\textrm{num}} $ the numerical solution obtained by our method. The computational domain
	$D=[0,1]$ in 1D and $D=[0,1]\times[0,1]$ in 2D and the final time $T=1$ in all examples. In all examples, both the
	total mass and the total energy are checked to be a constant during the time evolution.
	
	The initial data in 1D and 2D are chosen as
	\begin{equation}\label{eqn:ic1d}
	\psi_{\textrm{in}}(x)=(\frac{10}{\pi})^{1/4}e^{-5(x-1/2)^2},
	\end{equation}
	and
	\begin{equation}\label{eqn:ic2d}
	\psi_{\textrm{in}}(x,y)=(\frac{10}{\pi})^{1/2}e^{-5(x-1/2)^2-5(y-1/2)^2},
	\end{equation}
	respectively.
	
	For convenience, we introduce the $ L^2 $ norm and $ H^1 $ norm as
	\[
	||\psi^{\epsilon}||^2_{L^2}=\int_{D}|\psi^{\epsilon}|^2 \d\bx, \quad ||\psi^{\epsilon}||^2_{H^1}=\int_{D}|\nabla \psi^{\epsilon}|^2 \d\bx + \int_{D}|\psi^{\epsilon}|^2 \d\bx.
	\]
	In what follows, we shall compare the relative error between the numerical solution and the exact solution in both $ L^2 $ norm and $ H^1 $ norm
	\begin{align}
		\textrm{Error}_{L^2}& =\dfrac{||\psi^{\epsilon}_{\textrm{num}}-\psi^{\epsilon}_{\textrm{exact}}||_{L^2}}
		{||\psi^{\epsilon}_{\textrm{exact}}||_{L^2}}, \label{eqn:errorl2}\\
		\textrm{Error}_{H^1}& =\dfrac{||\psi^{\epsilon}_{\textrm{num}}-\psi^{\epsilon}_{\textrm{exact}}||_{H^1}}
		{||\psi^{\epsilon}_{\textrm{exact}}||_{H^1}}.\label{eqn:errorh1}
	\end{align}
	For \eqref{eqn:Sch} with \eqref{eqn:ic1d} or \eqref{eqn:ic2d}, we have $||\psi^{\epsilon}_{\textrm{exact}}||_{L^2}=1, \forall t>0$.
	Thus, the relative $L^2$ error \eqref{eqn:errorl2} is the same as the absolute $L^2$ error recorded in \cite{Huangetal:2007, DelgadilloLuYang:2016}. However, as $\veps$ reduces, $||\psi^{\epsilon}_{\textrm{exact}}||_{H^1}$ increases significantly.
	For example, when $\veps=1/256$, $||\psi^{\epsilon}_{\textrm{exact}}||_{H^1} = 179.93$ in \textbf{Example \ref{example3}}.
	We therefore consider relative errors in both $L^2$ norm and $H^1$ norm.
	
	Moreover, we will show the performance of our method for the approximation of observables, including the position density
	\begin{align}\label{eqn:density}
		n^{\epsilon}(\bx,t)=|\psi^{\epsilon}(\bx,t)|^2,
	\end{align}
	and the energy density
	\begin{align}\label{eqn:energy}
		e^{\epsilon}(\bx,t)=\frac{\epsilon^2}{2}|\nabla \psi^{\epsilon}(\bx,t)|^2+v^{\epsilon}(\bx) |\psi^{\epsilon}(\bx,t)|^2.
	\end{align}
	
	\begin{example}[1D case with a periodic potential]\label{example1}
		We start with the so-called Mathieu model where $ v^{\epsilon}(x) =\cos(2\pi\frac{x}{\epsilon}) $ is a periodic function of $x/\epsilon$.
		
		In Table \ref{table2} Table \ref{table_256case1}, we record the relative $L^2$ and $H^1$ errors on a series of coarse meshes when
		$ H/\epsilon = 1/2, 1/4, 1/8, 1/16, 1/32$ with
		$ \epsilon=1/40 $ and $ \epsilon=1/256 $, respectively. For a given $\veps$, one can easily see that the relative errors
		reduces in both $L^2$ norm and $H^1$ norm as $H/\veps$ reduces. Therefore, the meshsize condition \eqref{eqn:meshcondition} ($H/\veps \lesssim 1$) is necessary in our
		method to obtain numerical results with reasonable approximation accuracy.
		To get quantitative results, we further calculate the convergence rates in both norms. In Table \ref{table2}, convergence rates do not
		seem to be uniform in both norms when we change the value of $H/\veps$. However, as we further reduce $\veps$ to $1/256$, the results
		look better. Results in Table \ref{table_256case1} suggest that our method converges with rates 2 and 1 in $L^2$ norm and $H^1$ norm,
		respectively.
		\begin{table}[htbp]
			\centering
			\begin{tabular}{|r|r|r|r|r|}
				\hline
				$ H/\epsilon $ & $ \textrm{Error}_{L^2} $ & Order & $ \textrm{Error}_{H^1} $ & Order \\
				\hline
				$ 1/2 $ & 0.03763392 &       & 1.85952337 &  \\
				$ 1/4 $ & 0.03484993 & 0.12  & 1.73102295 & 0.10 \\
				$ 1/8 $ & 0.00037858 & 6.86  & 0.03425796 & 5.67 \\
				$ 1/16 $ & 0.00009600 & 1.99  & 0.02152483 & 0.67 \\
				$ 1/32 $ & 0.00004249 & 1.18  & 0.01661115 & 0.37 \\
				\hline
			\end{tabular}%
			\caption{Relative $L^2$ and $H^1$ errors of the wavefunction for \textbf{Example \ref{example1}} when $ \epsilon=1/40 $.}
			\label{table2}%
		\end{table}%

		\begin{table}[htbp]
			\centering
			\begin{tabular}{|r|r|r|r|r|}
				\hline
				$ H/\epsilon $ & $ \textrm{Error}_{L^2} $ & Order & $ \textrm{Error}_{H^1} $ & Order \\
				\hline
				$ 1/2 $ & 0.01562512 &       & 0.36076801 &  \\
				$ 1/4 $ & 0.00638978& 1.31  & 0.15041913 & 1.26 \\
				$ 1/8 $ & 0.00175078 & 1.86  & 0.04059514 & 1.89 \\
				$ 1/16 $ &0.00002774 & 6.45  & 0.00117276 & 5.17 \\
				$ 1/32 $ & 0.00000389 & 2.86  & 0.00058418 & 1.00 \\
				\hline
			\end{tabular}%
			\caption{Relative $L^2$ and $H^1$ errors of the wavefunction for \textbf{Example \ref{example1}} when $ \epsilon=1/256 $.}
			\label{table_256case1}%
		\end{table}%

		Next, we visualize profiles of the position density function \eqref{eqn:density} and the energy density function \eqref{eqn:energy}
		when $\veps=1/40$ and $\veps=1/256$ in
		Figure \ref{fig1} and Figure \ref{fig1_256}, respectively. Excellent agreements between numerical solutions and the exact solutions
		also imply that our method can also approximate the observables with high accuracy on coarse meshes with $ H=\mathcal{O}(\epsilon) $.
		\begin{figure}[tbph]
			\centering
			\begin{subfigure}{0.39\textwidth}
				\centering
				\includegraphics[width=\textwidth]{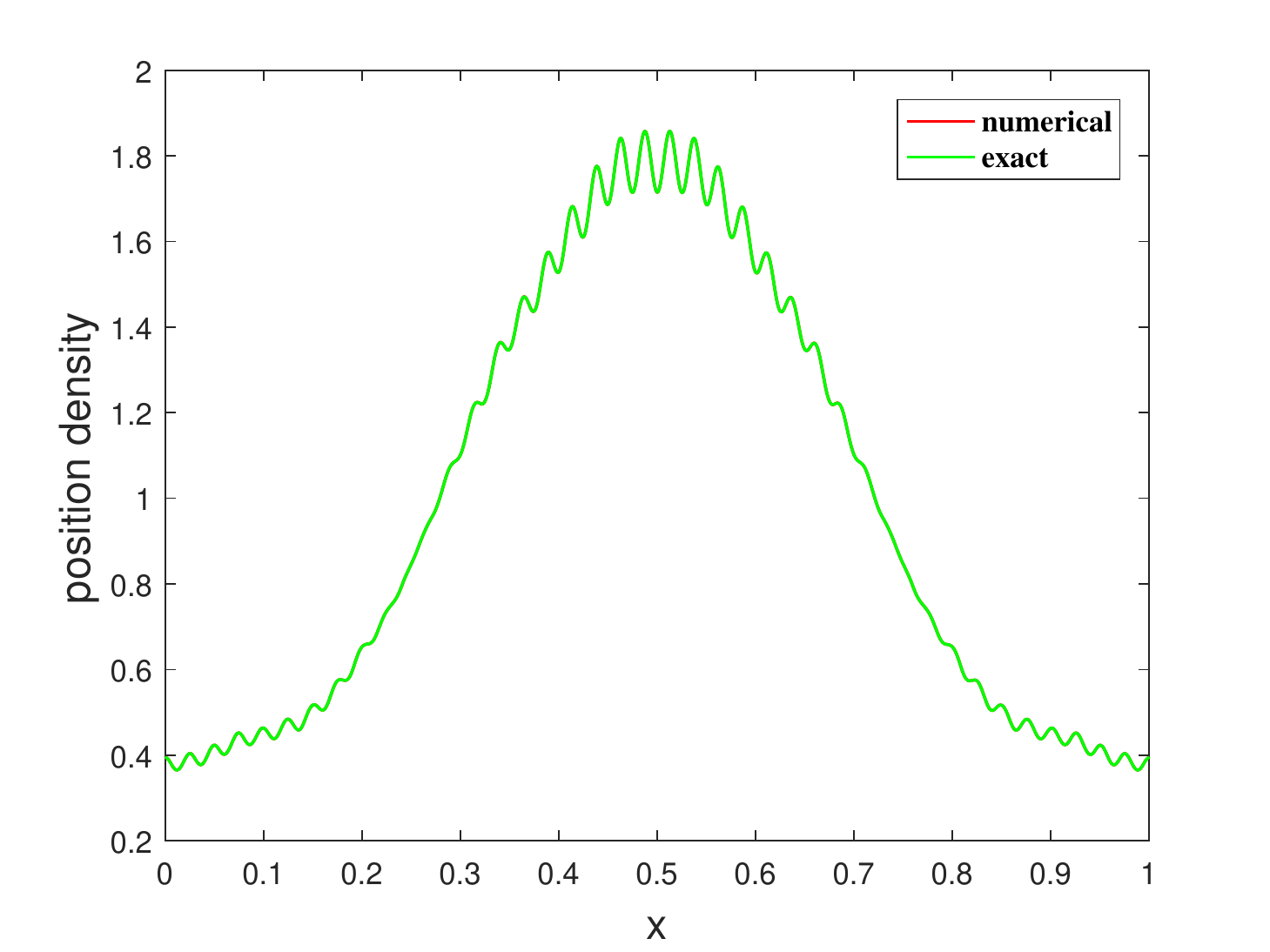}
				%  \caption{Mean}
				\label{fig:position1}
			\end{subfigure}
			\begin{subfigure}{0.39\textwidth}
				\centering
				\includegraphics[width=\textwidth]{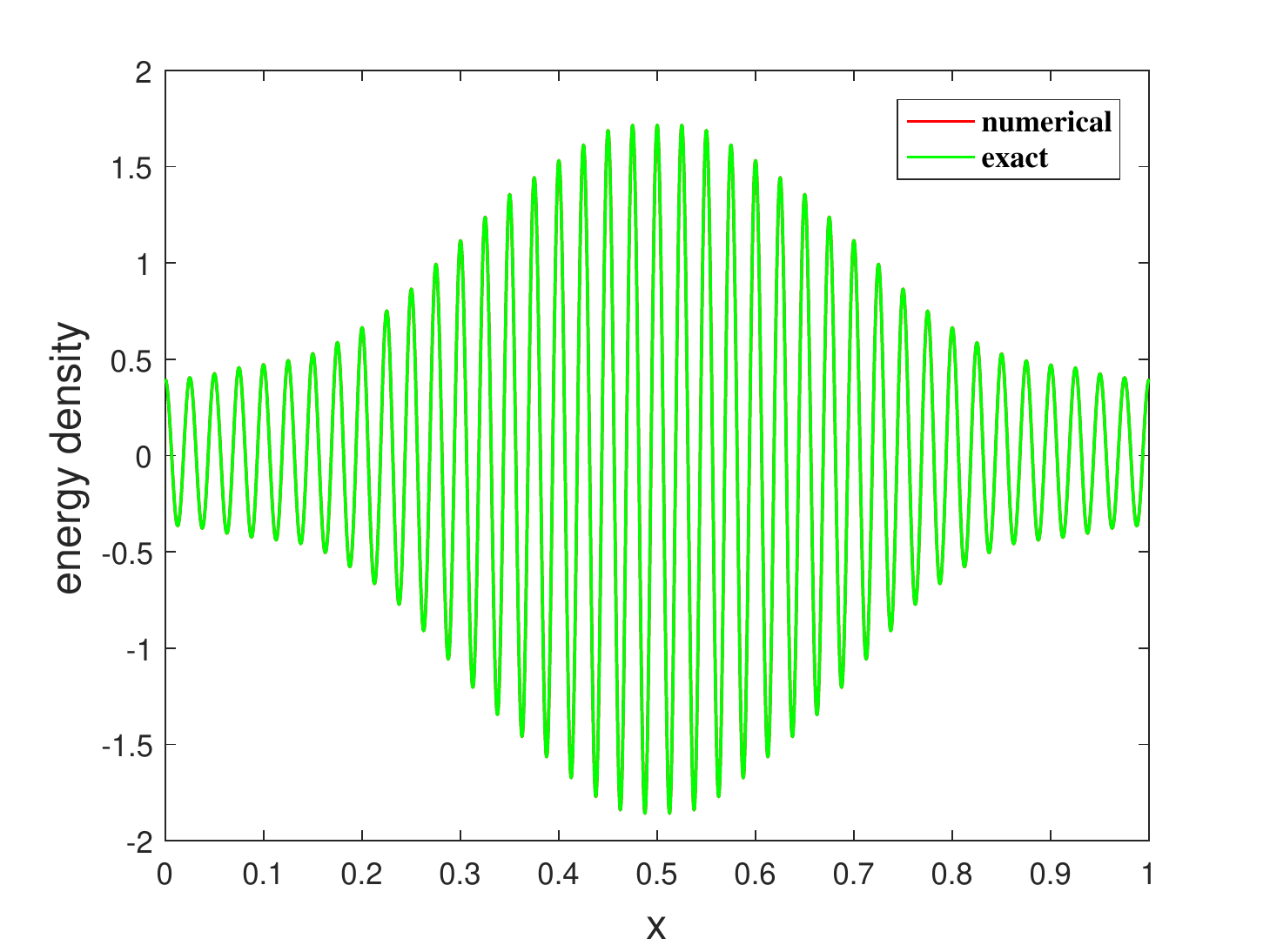}
				% \caption{STD}
				\label{fig:energy1}
			\end{subfigure}
			\caption{Profiles of numerical and exact density functions for \textbf{Example \ref{example1}} when $ H/\epsilon=1/16$ and $ \epsilon=1/40 $. Left: position density; Right: energy density.}
			\label{fig1}
		\end{figure}

		\begin{figure}[H]
			\centering
			\begin{subfigure}{0.39\textwidth}
				\centering
				\includegraphics[width=\textwidth]{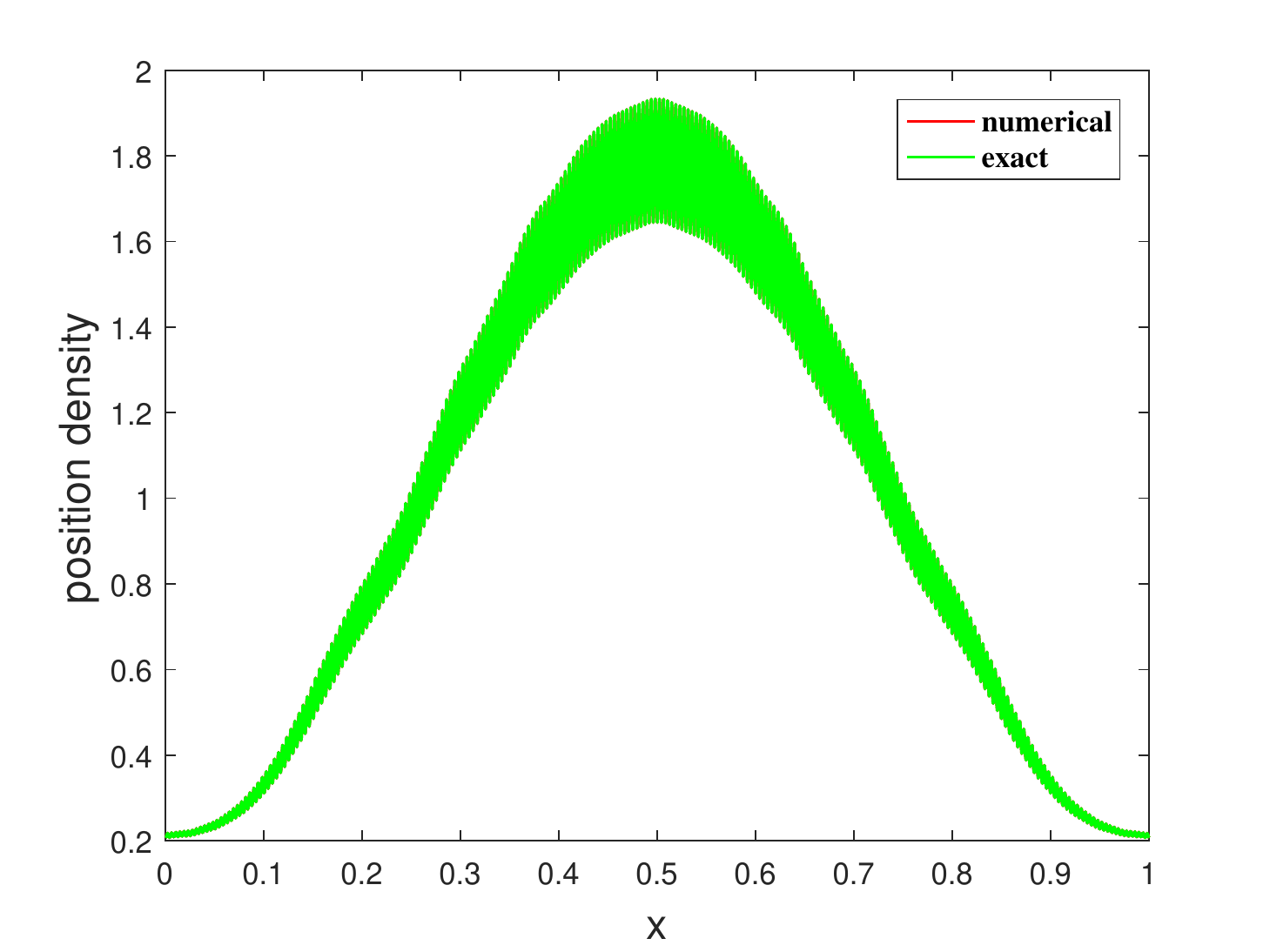}
				%  \caption{Mean}
				\label{fig:position1_256}
			\end{subfigure}
			\begin{subfigure}{0.39\textwidth}
				\centering
				\includegraphics[width=\textwidth]{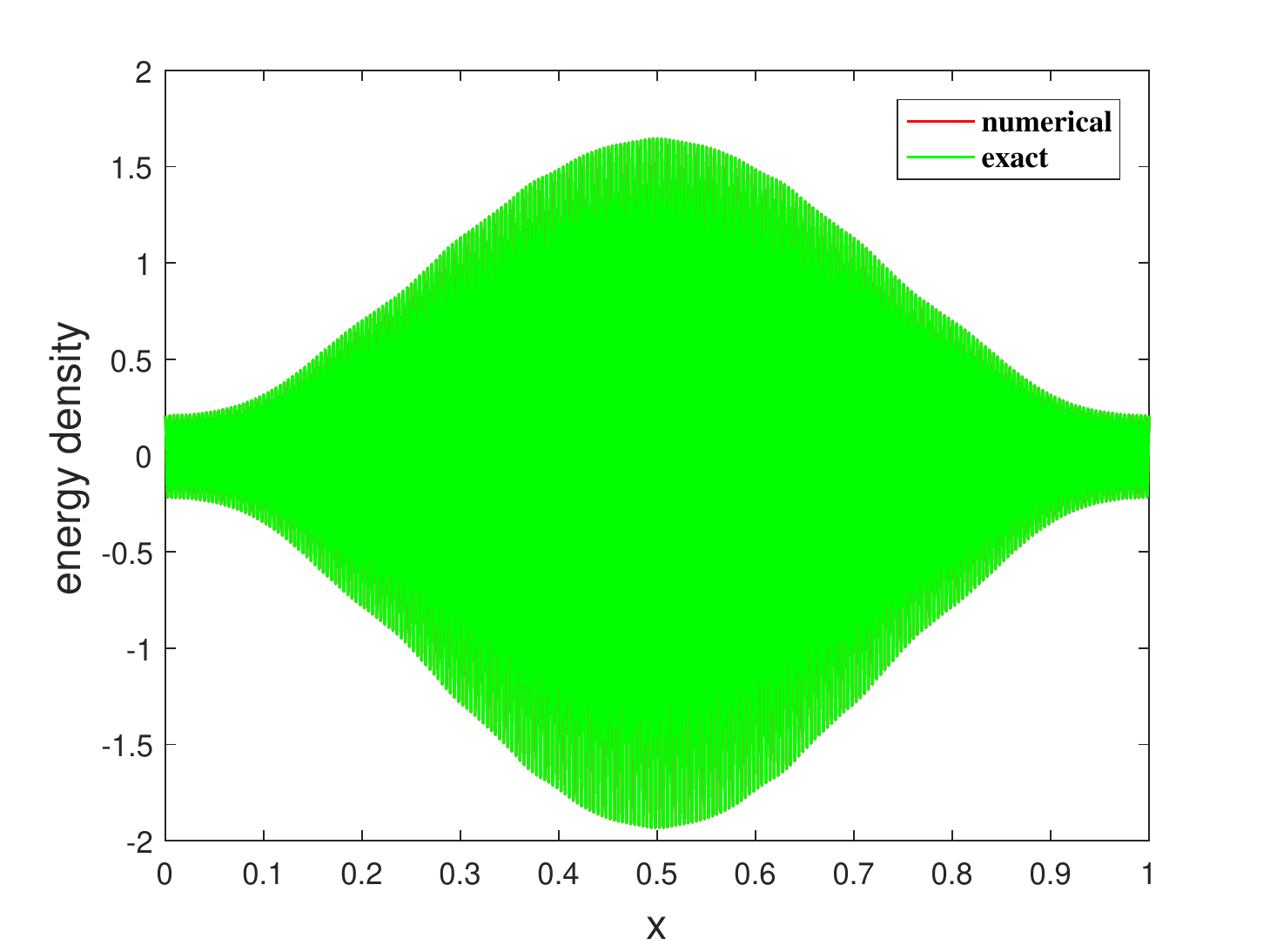}
				% \caption{STD}
				\label{fig:energy1_256}
			\end{subfigure}
			\caption{Profiles of numerical and exact density functions for \textbf{Example \ref{example1}} when $ H/\epsilon =1/16$ and $ \epsilon=1/256 $. Left: position density; Right: energy density.}
			\label{fig1_256}
		\end{figure}
		
	\end{example}

	\begin{example}[1D case with a multiplicative two-scale potential]\label{example2}
		The second example is a multiplicative two-scale potential where $v^{\epsilon}(x)=\sin(4x^2)\sin(2\pi \frac{x}{\epsilon})$.
		
		In Table \ref{table4} and Table \ref{table4_256}, we record the relative $L^2$ and $H^1$ errors on a series of coarse meshes when
		$ H/\epsilon = 1/2, 1/4, 1/8, 1/16, 1/32$ with $ \epsilon=1/40 $ and $ \epsilon=1/256 $, respectively.
		Similar to those observed in Table \ref{table2} Table \ref{table_256case1}, we have
		the meshsize condition \eqref{eqn:meshcondition} and convergence rates 2 and 1 in $L^2$ norm and $H^1$ norm,
		respectively.
		\begin{table}[htbp]
			\centering
			\begin{tabular}{|r|r|r|r|r|}
				\hline
				$ H/\epsilon $ & $ \textrm{Error}_{L^2} $ & Order & $ \textrm{Error}_{H^1} $ & Order \\
				\hline
				$ 	1/2 $ & 0.02416939 &       & 0.66267557 &  \\
				$ 1/4  $& 0.02505272 & -0.05  & 0.69203510 & -0.06 \\
				$ 1/8 $& 0.00036056& 6.38  & 0.02010799 & 5.11 \\
				$ 1/16 $ &0.00010290 & 1.84  & 0.01248123 & 0.70 \\
				$ 1/32 $ &0.00004129 & 1.31  & 0.00943716 & 0.41 \\
				\hline
			\end{tabular}%
			\caption{Relative $L^2$ and $H^1$ errors of the wavefunction for \textbf{Example \ref{example2}} when $ \epsilon=1/40 $.}
			\label{table4}%
		\end{table}%
		
		\begin{table}[H]
			\centering
			\begin{tabular}{|r|r|r|r|r|}
				\hline
				$ H/\epsilon $ & $ \textrm{Error}_{L^2} $ & Order & $ \textrm{Error}_{H^1} $ & Order \\
				\hline
				$ 	1/2 $ & 0.02524212 &       &0.62422624&  \\
				$ 1/4  $& 0.00623362 & 2.02  & 0.15528365& 2.01 \\
				$ 1/8 $& 0.00124272& 2.36  & 0.03098464& 2.37 \\
				$ 1/16 $ &0.00002386 & 5.95  & 0.00138226 & 4.49 \\
				$ 1/32 $ &0.00000478 & 2.34  & 0.00071506 & 0.96 \\
				\hline
			\end{tabular}%
			\caption{Relative $L^2$ and $H^1$ errors of the wavefunction for \textbf{Example \ref{example2}} when $ \epsilon=1/256 $.}
			\label{table4_256}%
		\end{table}%
		
		Profiles of the position density function \eqref{eqn:density} and the energy density function \eqref{eqn:energy}
		when $\veps=1/40$ and $\veps=1/256$ are plotted in
		Figure \ref{fig2} and Figure \ref{fig2_256}, respectively. Excellent agreements between numerical solutions and the exact solutions
		are observed again.
		\begin{figure}[tbph]
			\centering
			\begin{subfigure}{0.39\textwidth}
				\centering
				\includegraphics[width=\textwidth]{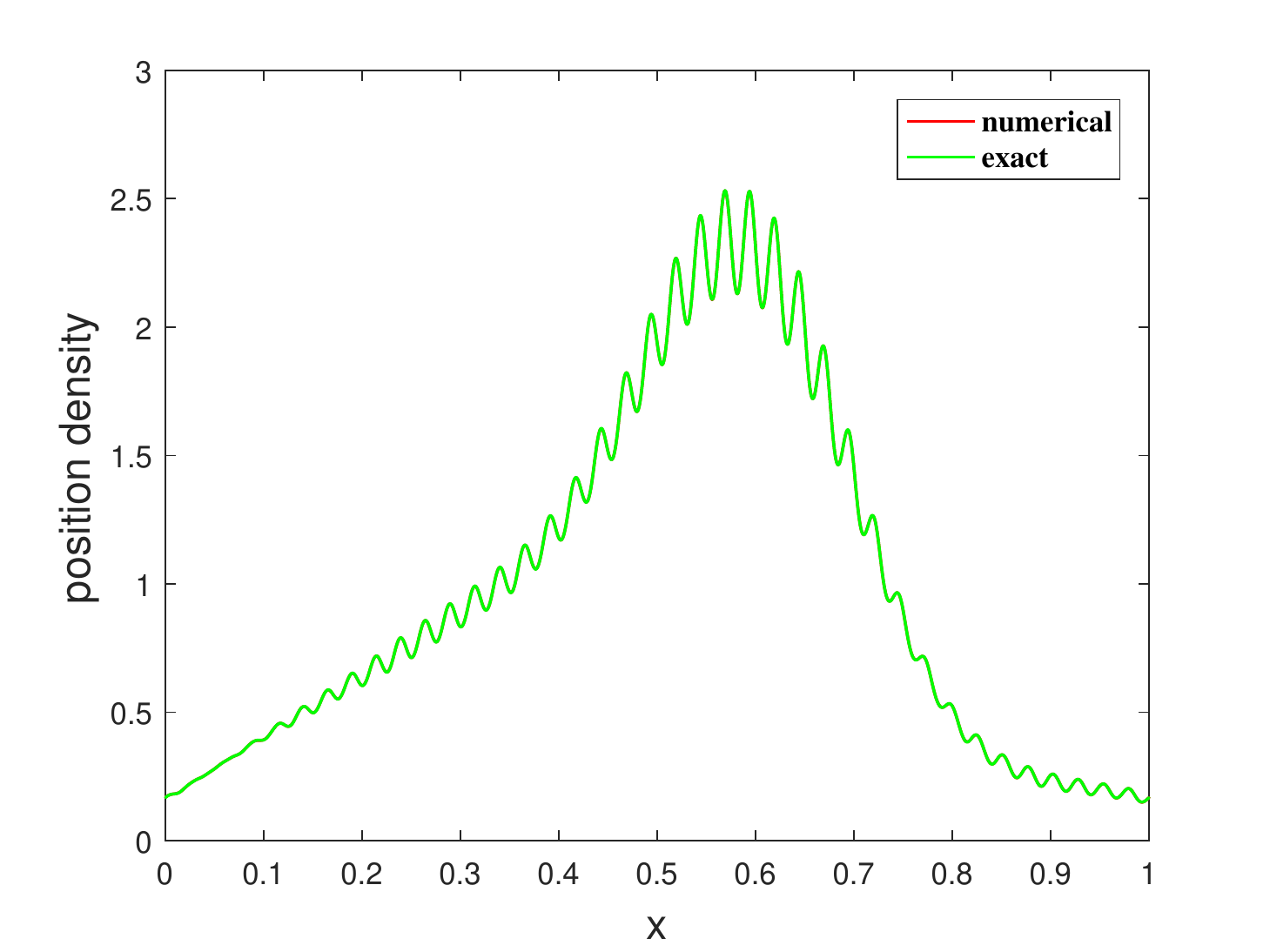}
				%  \caption{Mean}
				\label{fig:position2}
			\end{subfigure}
			\begin{subfigure}{0.39\textwidth}
				\centering
				\includegraphics[width=\textwidth]{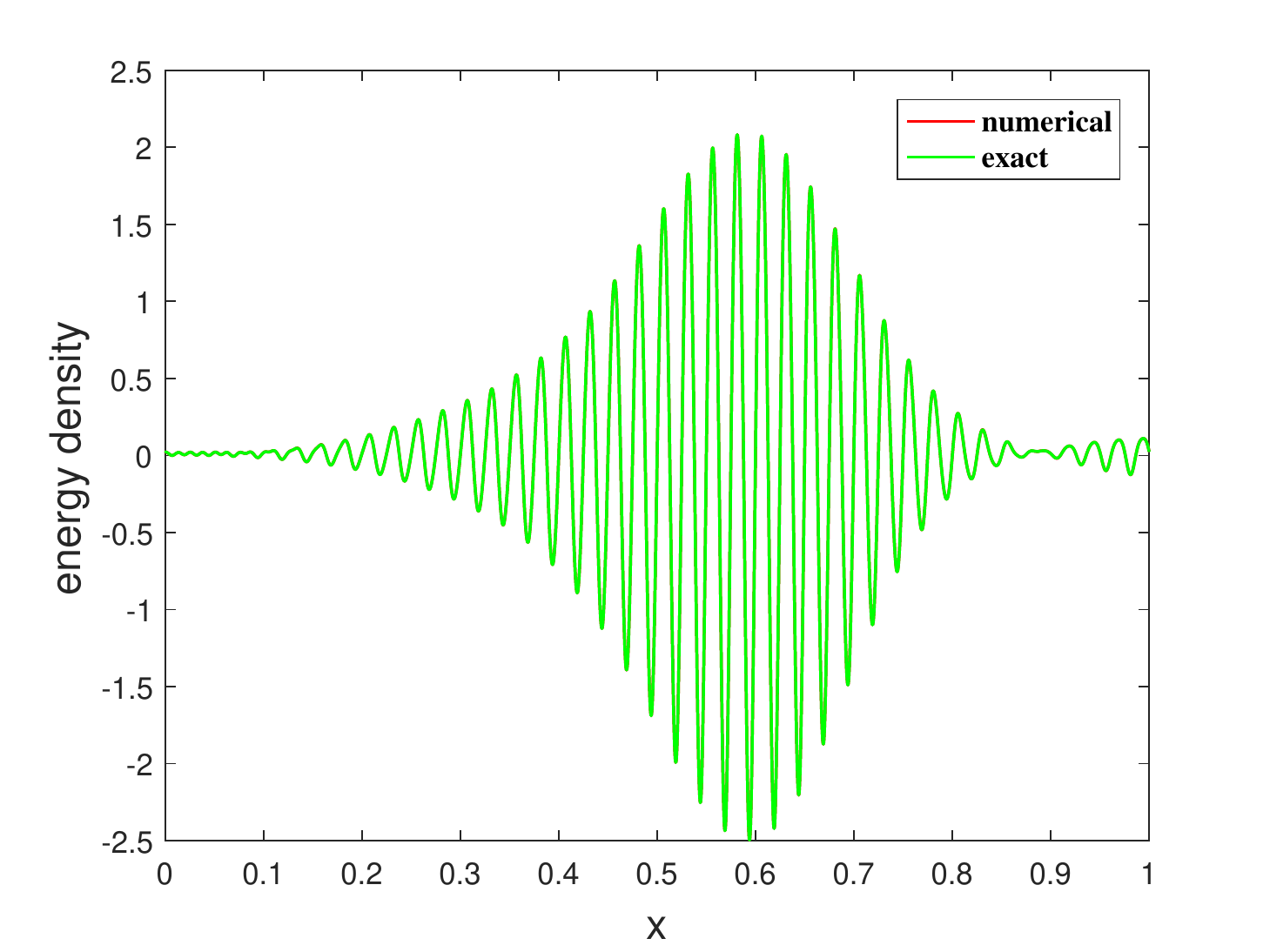}
				% \caption{STD}
				\label{fig:energy2}
			\end{subfigure}
			\caption{Profiles of numerical and exact density functions for \textbf{Example \ref{example2}} when $ H / \epsilon =1/8$ and $ \epsilon=1/40 $. Left: position density; Right: energy density.}
			\label{fig2}
		\end{figure}

		\begin{figure}[H]
			\centering
			\begin{subfigure}{0.39\textwidth}
				\centering
				\includegraphics[width=\textwidth]{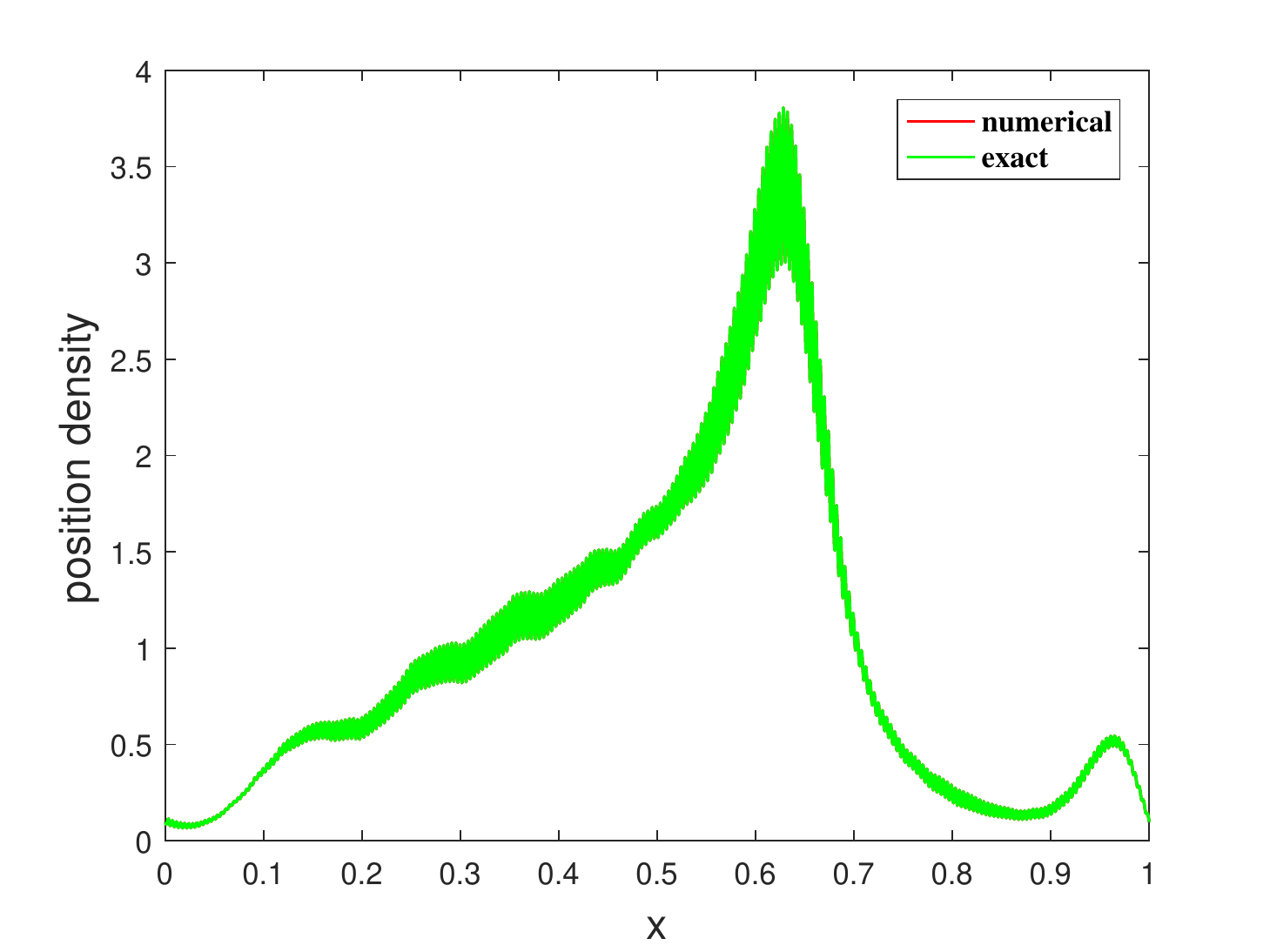}
				%  \caption{Mean}
				\label{fig:position2_256}
			\end{subfigure}
			\begin{subfigure}{0.39\textwidth}
				\centering
				\includegraphics[width=\textwidth]{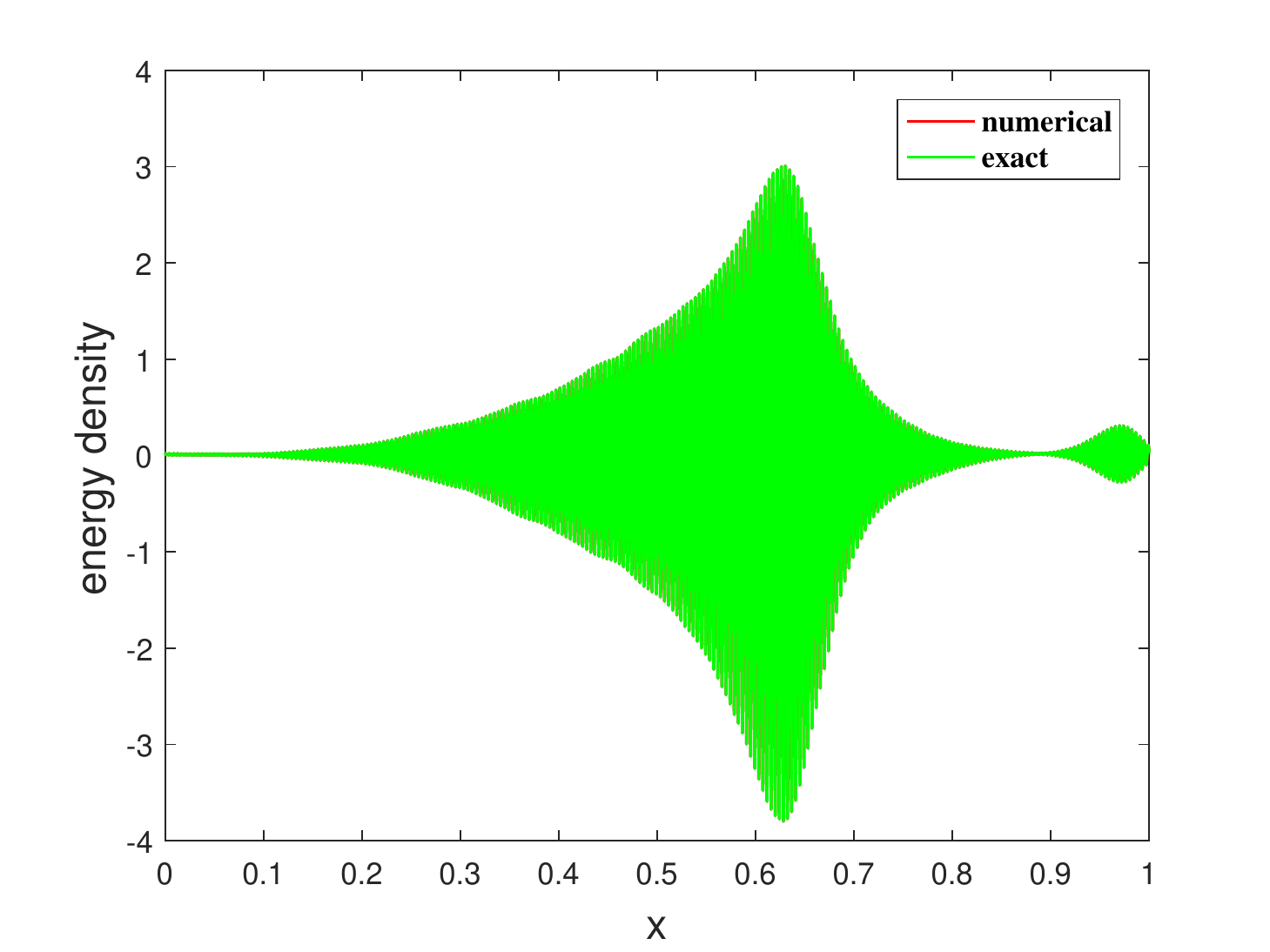}
				% \caption{STD}
				\label{fig:energy2_256}
			\end{subfigure}
			\caption{Profiles of numerical and exact density functions for \textbf{Example \ref{example2}} when $ H/\epsilon=1/16$ and $ \epsilon=1/256 $. Left: position density; Right: energy density.}
			\label{fig2_256}
		\end{figure}
		
	\end{example}

	\begin{example}[1D case with a layered potential]\label{example3}
		
		Consider
		\begin{equation}\label{eqn:potential3}
		v^{\epsilon}(x)= u(x) + \left\{
		\begin{aligned}
		\cos(2\pi\frac{x}{\epsilon_1})+1,\qquad&0\leq x\leq \frac{1}{3},\\
		\cos(2\pi\frac{x}{\epsilon_2})+1,\qquad&\frac{1}{3}< x\leq \frac{2}{3},\\
		\cos(2\pi\frac{x}{\epsilon_1})+1,\qquad&\frac{2}{3}< x\leq 1,
		\end{aligned}
		\right.
		\end{equation}
		where $ \epsilon_1=1/64 $ and $ \epsilon_2=1/256 $. This potential is used to mimic the heterojunction
		commonly used in spintronic devices \cite{Zutic:2004} in the presence of an external potential $ u(x)=|x-0.5|^2 $.
		Note that the potential \eqref{eqn:potential3} is set to be discontinuous to mimic the interface between
		dissimilar lattice structures.
		We set $\epsilon=1/256$.
		
		In Table \ref{table7}, we record the relative $L^2$ and $H^1$ errors on a series of coarse meshes when
		$ H/\epsilon = 1/2, 1/4, 1/8, 1/16, 1/32$ with $\epsilon=1/256$.
		Similar to those observed in Table \ref{table_256case1} and Table \ref{table4_256}, we have
		the meshsize condition \eqref{eqn:meshcondition} and convergence rates 2 and 1 in $L^2$ norm and $H^1$ norm,
		respectively.
		\begin{table}[H]
			\centering
			\begin{tabular}{|r|r|r|r|r|}
				\hline
				$ H/\epsilon $ & $ \textrm{Error}_{L^2} $ & Order & $ \textrm{Error}_{H^1} $ & Order \\
				\hline
				$ 	1/2 $ &0.04635974&       &0.35181488&  \\
				$ 1/4  $& 0.02329746& 0.99  &0.20526570& 0.78\\
				$ 1/8 $&0.00157223& 3.95  &0.01436669& 3.84\\
				$ 1/16 $ &0.00003767& 5.71  & 0.00077857& 4.22 \\
				$ 1/32 $ &0.00000527& 2.85  &0.00025634& 1.60\\
				\hline
			\end{tabular}%
			\caption{Relative $L^2$ and $H^1$ errors of the wavefunction for \textbf{Example \ref{example3}} when $ \epsilon=1/256 $.}
			\label{table7}%
		\end{table}
		
		Profiles of the position density function \eqref{eqn:density} and the energy density function \eqref{eqn:energy}
		when $\veps=1/256$ are plotted in
		Figure \ref{fig4}. Excellent agreements between numerical solutions and the exact solutions are observed again.
		\begin{figure}[H]
			\centering
			\begin{subfigure}{0.39\textwidth}
				\centering
				\includegraphics[width=\textwidth]{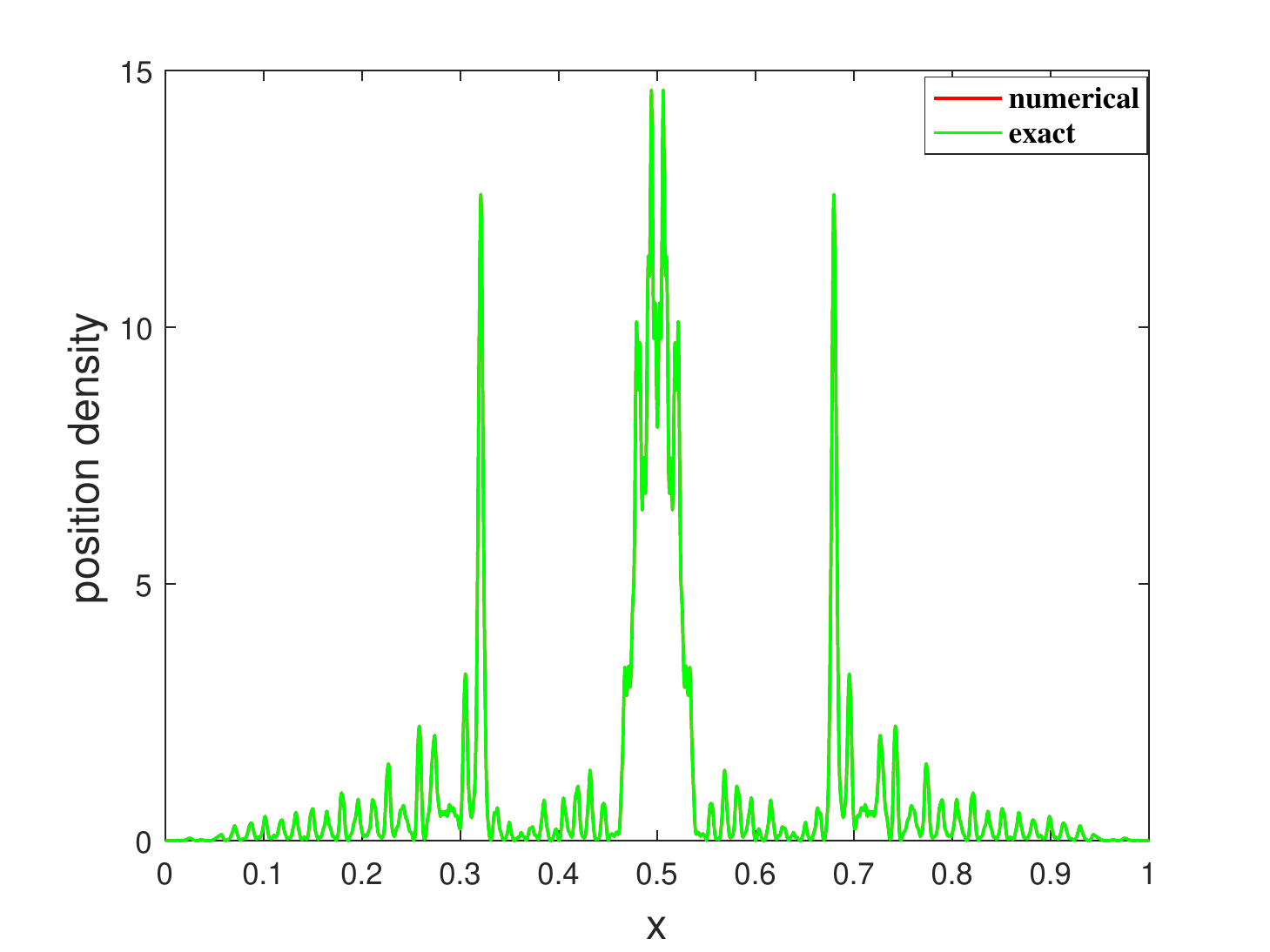}
				%  \caption{Mean}
				\label{fig:positionABA1D}
			\end{subfigure}
			\begin{subfigure}{0.39\textwidth}
				\centering
				\includegraphics[width=\textwidth]{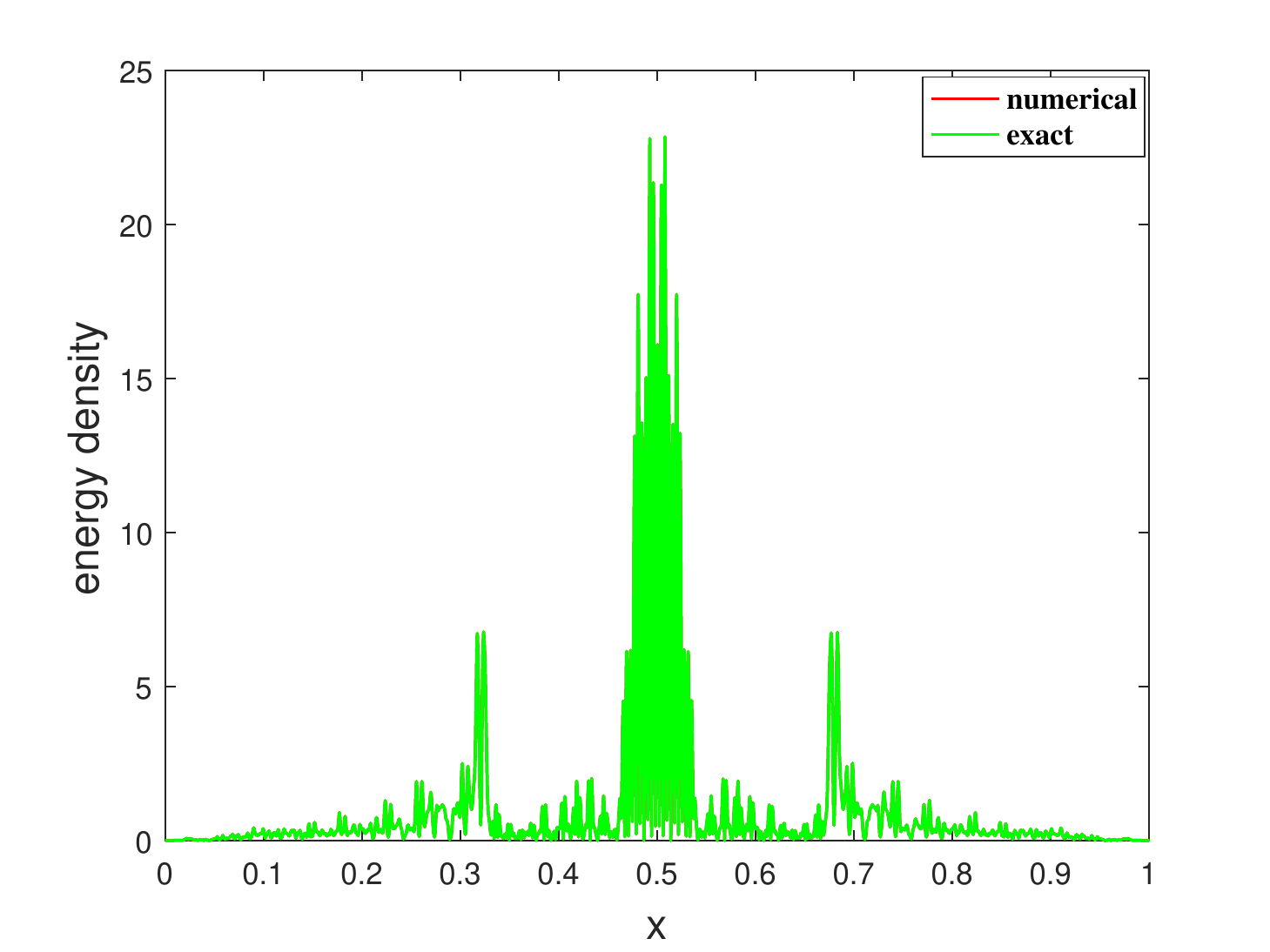}
				% \caption{STD}
				\label{fig:energyABA1D}
			\end{subfigure}
			\caption{Profiles of numerical and exact density functions for \textbf{Example \ref{example3}} when $ H / \epsilon = 1/16$ and $ \epsilon=1/256 $. Left: position density; Right: energy density.}
			\label{fig4}
		\end{figure}
		
	\end{example}

	\begin{example}[2D case with an additive two-scale potential]\label{example4}
		\noindent	
		The first 2D example is an additive two-scale potential of the form
		\[
		v^{\epsilon}(x,y) =  1+\sin(4x^2y^2)+\frac{(1.5+\sin(2\pi \frac{x}{\epsilon}))}{(1.5+\cos(2\pi \frac{y}{\epsilon}))}.
		\]
		
		Profiles of the position density function \eqref{eqn:density} and the energy density function \eqref{eqn:energy}
		when $\veps=1/16$ are plotted in Figure \ref{fig4} and Figure \ref{fig5}, respectively. Similar to those in 1D,
		excellent agreements between numerical solutions and the exact solutions are observed again.
		\begin{figure}[H]
			\centering
			\begin{subfigure}{0.39\textwidth}
				\centering
				\includegraphics[width=\textwidth]{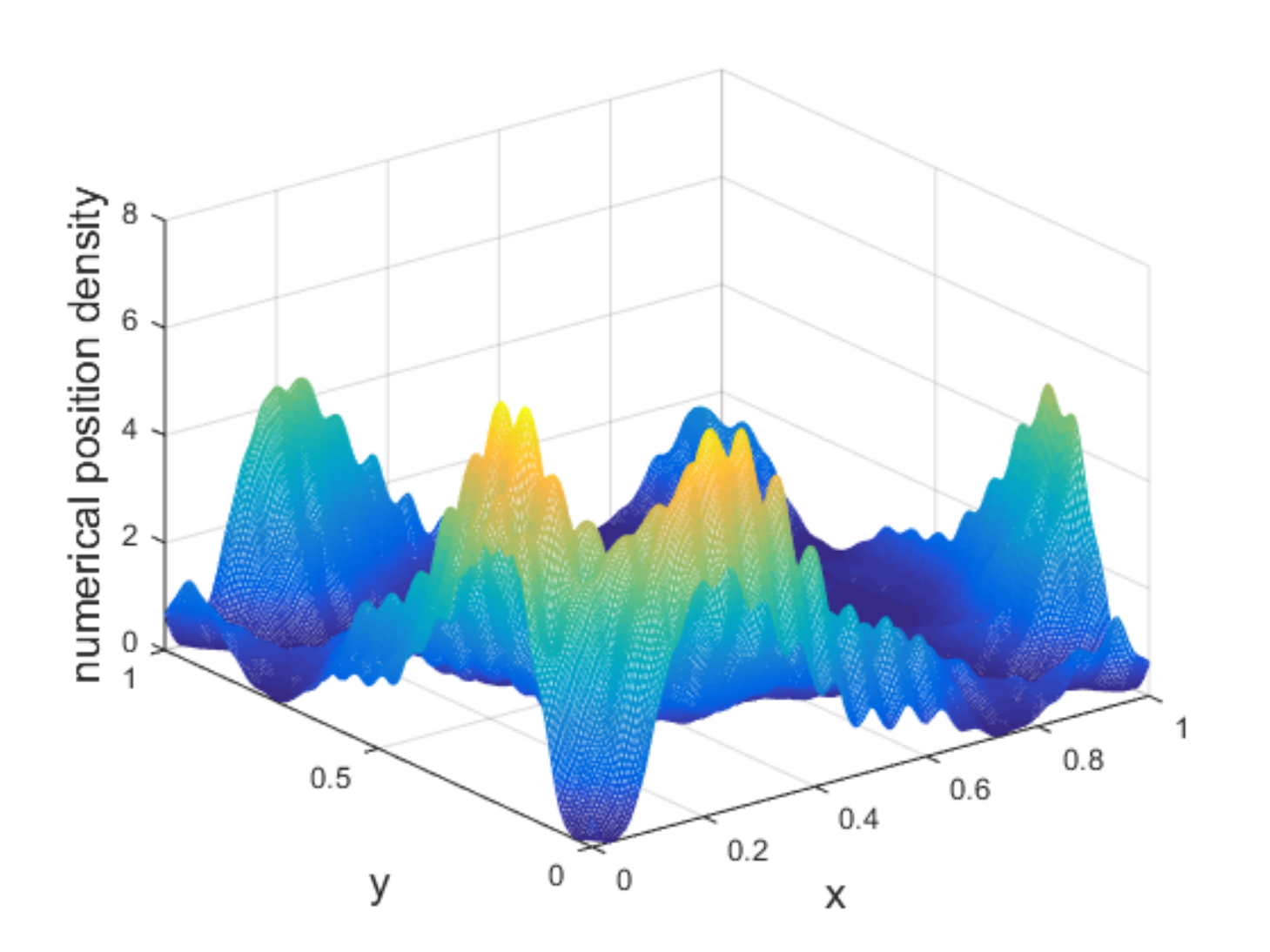}
				%  \caption{Mean}
				\label{fig:position4_1}
			\end{subfigure}
			\begin{subfigure}{0.39\textwidth}
				\centering
				\includegraphics[width=\textwidth]{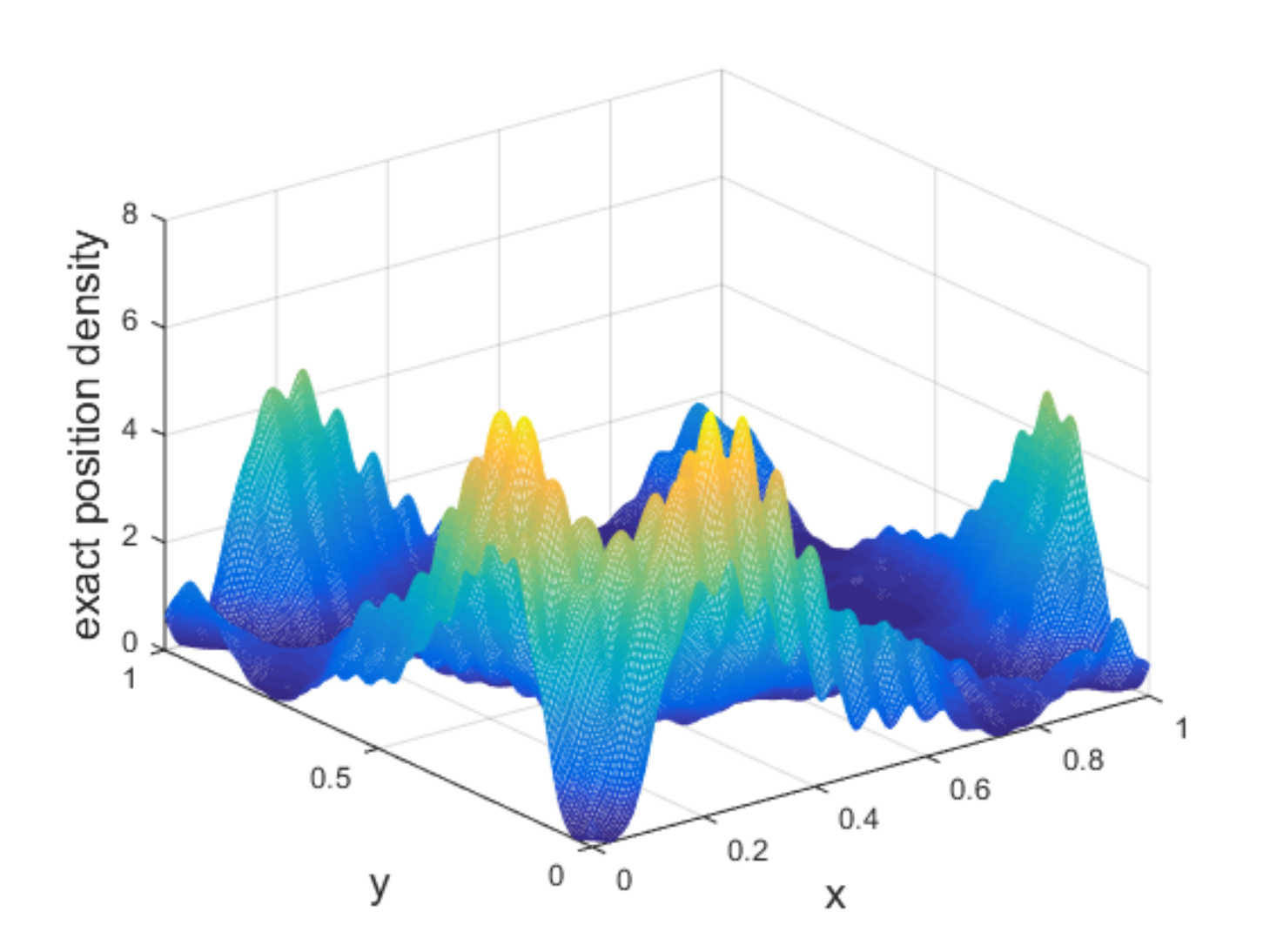}
				% \caption{STD}
				\label{fig:position4_2}
			\end{subfigure}
			\caption{Profiles of position density functions for \textbf{Example \ref{example4}} when $ H / \epsilon =1/4$ and $ \epsilon=1/16 $. Left: numerical solution; Right: exact solution.}
			\label{fig5}
		\end{figure}
		
		\begin{figure}[H]
			\centering
			\begin{subfigure}{0.39\textwidth}
				\centering
				\includegraphics[width=\textwidth]{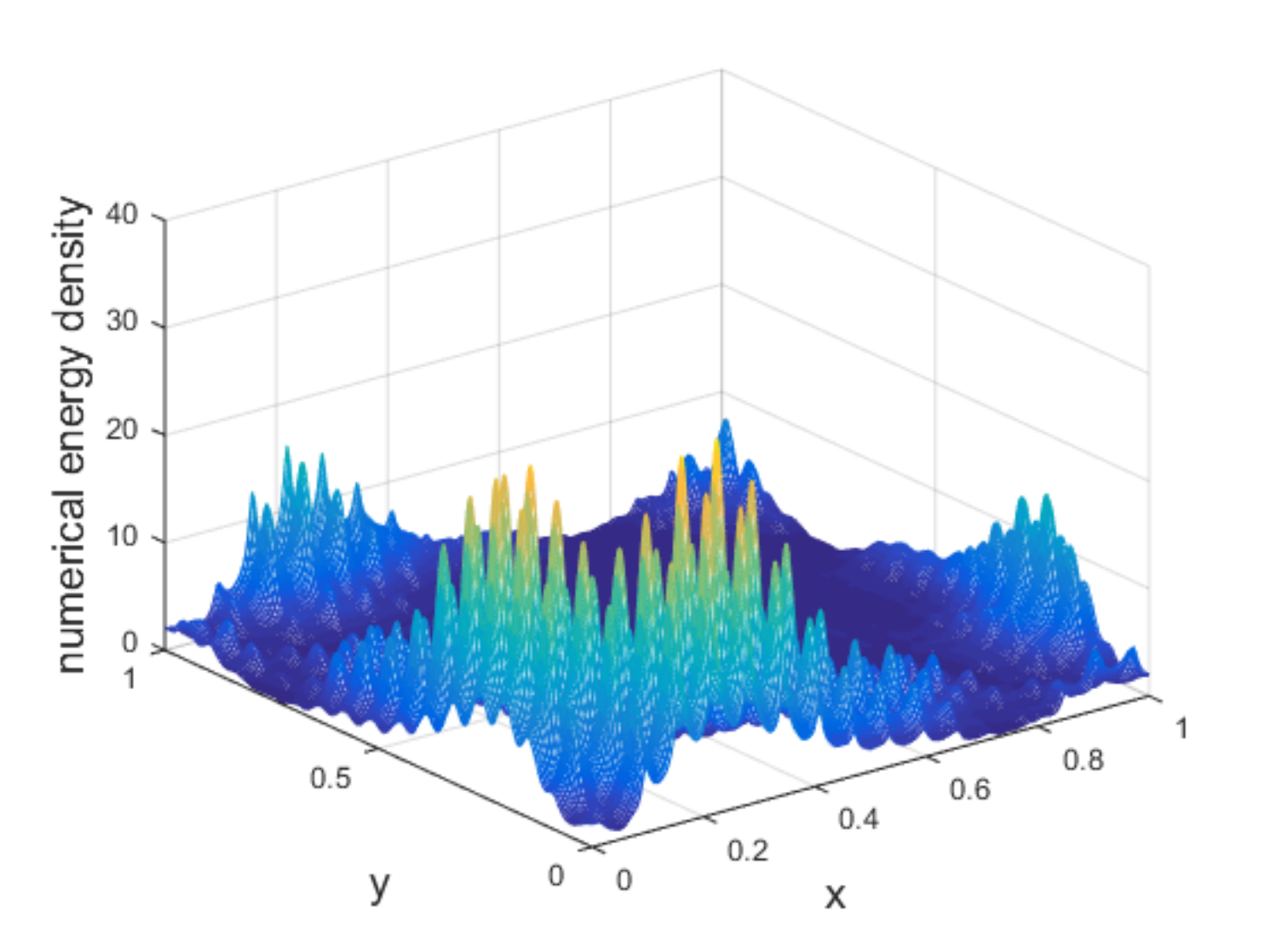}
				%  \caption{Mean}
				\label{fig:energy4_1}
			\end{subfigure}
			\begin{subfigure}{0.39\textwidth}
				\centering
				\includegraphics[width=\textwidth]{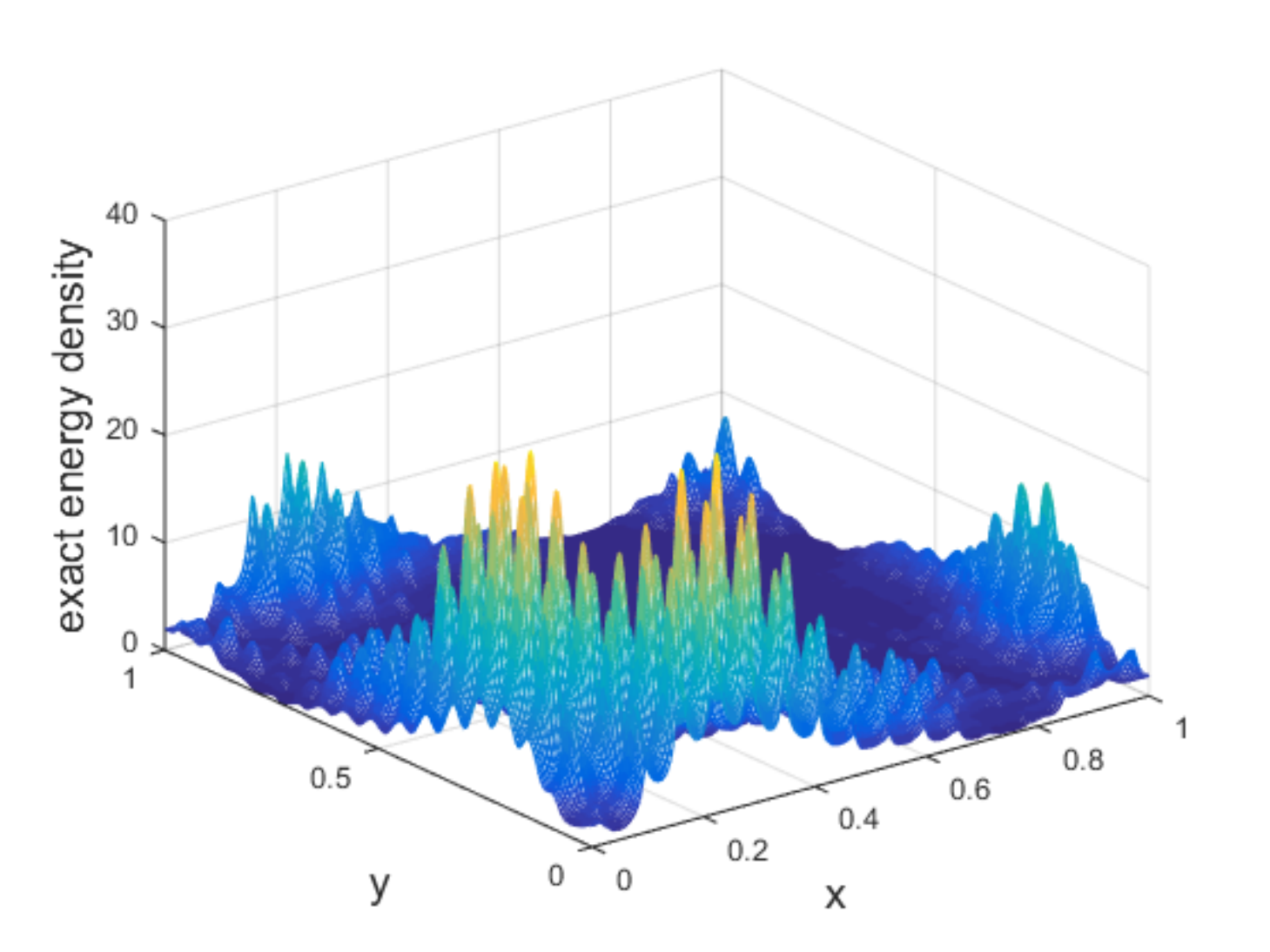}
				% \caption{STD}
				\label{fig:energy4_2}
			\end{subfigure}
			\caption{Profiles of energy density functions for \textbf{Example \ref{example4}} when $ H/\epsilon = 1/4$ and $ \epsilon=1/16 $. Left: numerical solution; Right: exact solution.}
			\label{fig6}
		\end{figure}
		
		Table \ref{table_example2Dcase1} records the relative errors in both $L^2$ norm and $H^1$ norm for a series of coarse meshes satisfying
		$ H/\epsilon = 1/2, 1/4, 1/8$ with $\epsilon=1/16$.
		\begin{table}[H]
			\centering
			\begin{tabular}{|r|r|r|r|r|}
				\hline
				$ H/\epsilon $ & $ \textrm{Error}_{L^2} $ & Order & $ \textrm{Error}_{H^1} $ & Order \\
				\hline
				$ 	1/2 $ &0.04462747&       &0.35583737&  \\
				$ 1/4  $&0.02760301& 0.69  &0.24731666& 0.52\\
				$ 1/8 $& 0.00459407& 2.59  &0.08146394& 1.60\\
				\hline
			\end{tabular}%
			\caption{Relative $L^2$ and $H^1$ errors of the wavefunction for \textbf{Example \ref{example4}} when $ \epsilon=1/16 $.}
			\label{table_example2Dcase1}
		\end{table}
		
		In Figure \ref{fig7} and Figure \ref{fig8}, we plot the real and imaginary parts of the numerical and exact wavefunctions, respectively.
		As confirmed in Table \ref{table_example2Dcase1}, convergence of the numerical wavefunction to the exact wavefunction
		is observed in both $L^2$ norm and $H^1$ norm. Again, the meshsize condition \eqref{eqn:meshcondition} and convergence rates
		2 and 1 in $L^2$ norm and $H^1$ norm are suggested.
		\begin{figure}[H]
			\centering
			\begin{subfigure}{0.39\textwidth}
				\centering
				\includegraphics[width=\textwidth]{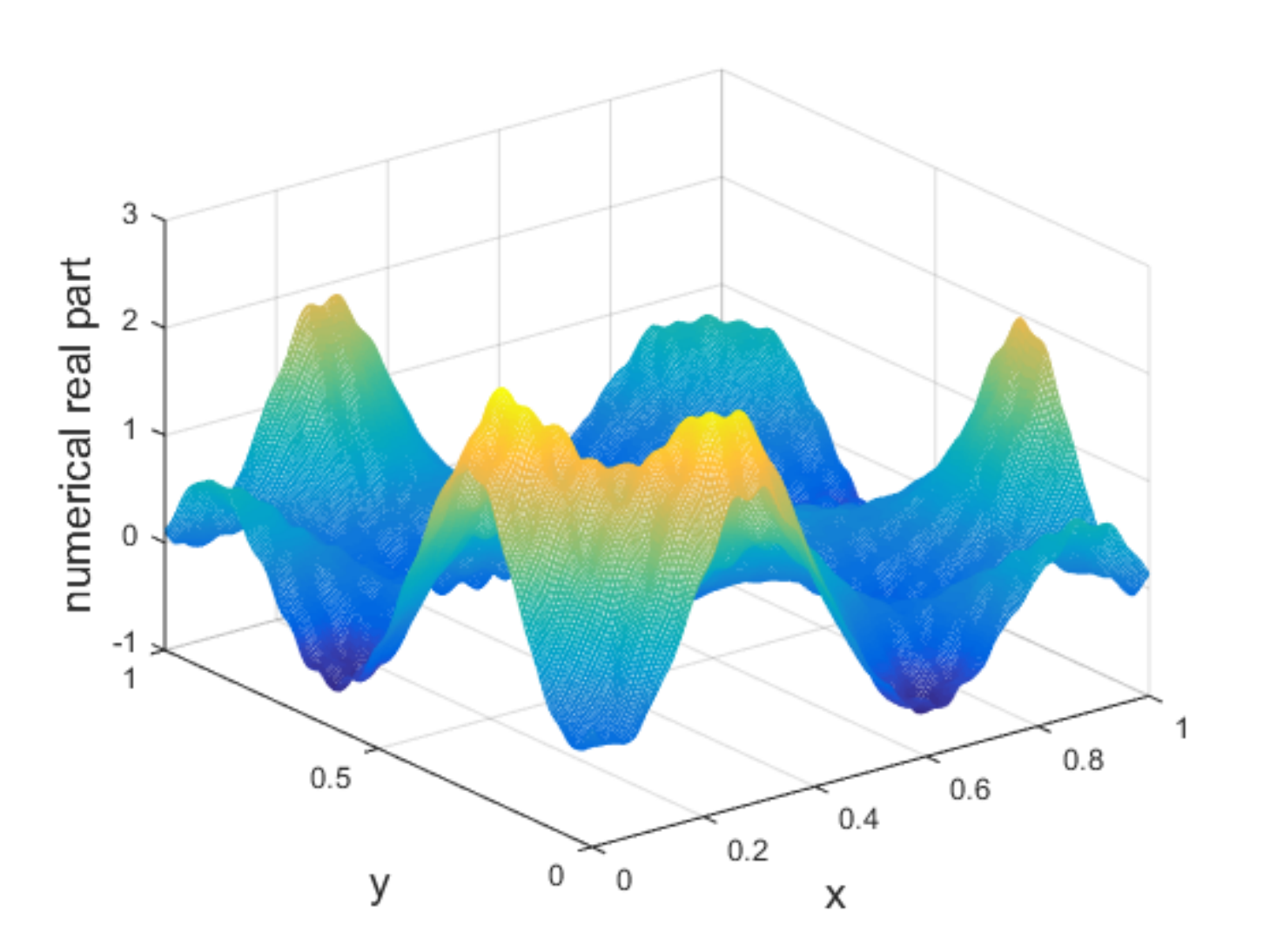}
				%  \caption{Mean}
				\label{fig:realpart_1}
			\end{subfigure}
			\begin{subfigure}{0.39\textwidth}
				\centering
				\includegraphics[width=\textwidth]{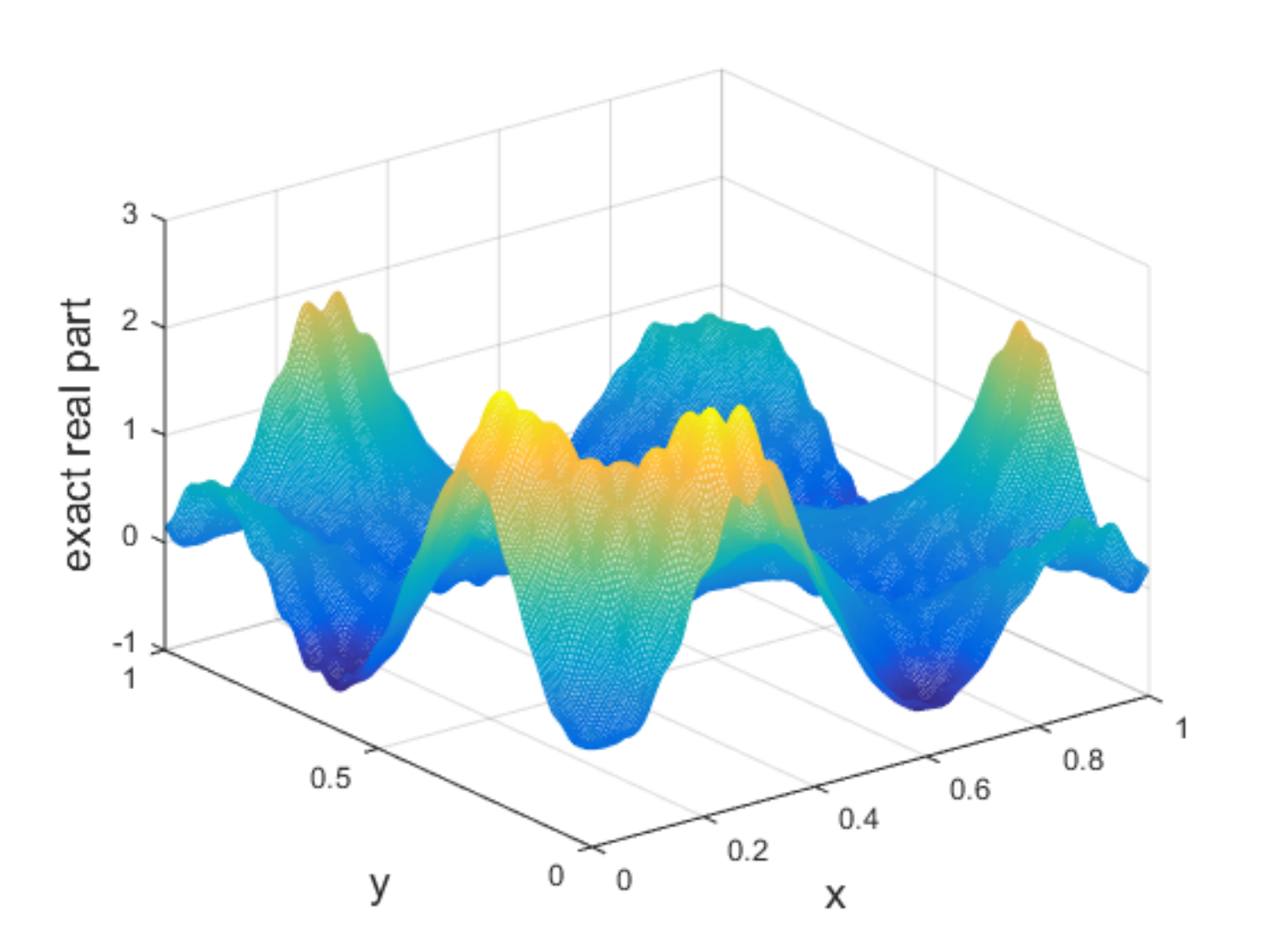}
				% \caption{STD}
				\label{fig:realpart_2}
			\end{subfigure}
			\caption{Real part of the wavefunction $ \psi^{\epsilon}(x,y,t) $ for \textbf{Example \ref{example4}} when $ H / \epsilon=1/4$ and $ \epsilon=1/16 $. Left: numerical solution; Right: exact solution.}
			\label{fig7}
		\end{figure}
		
		\begin{figure}[H]
			\centering
			\begin{subfigure}{0.39\textwidth}
				\centering
				\includegraphics[width=\textwidth]{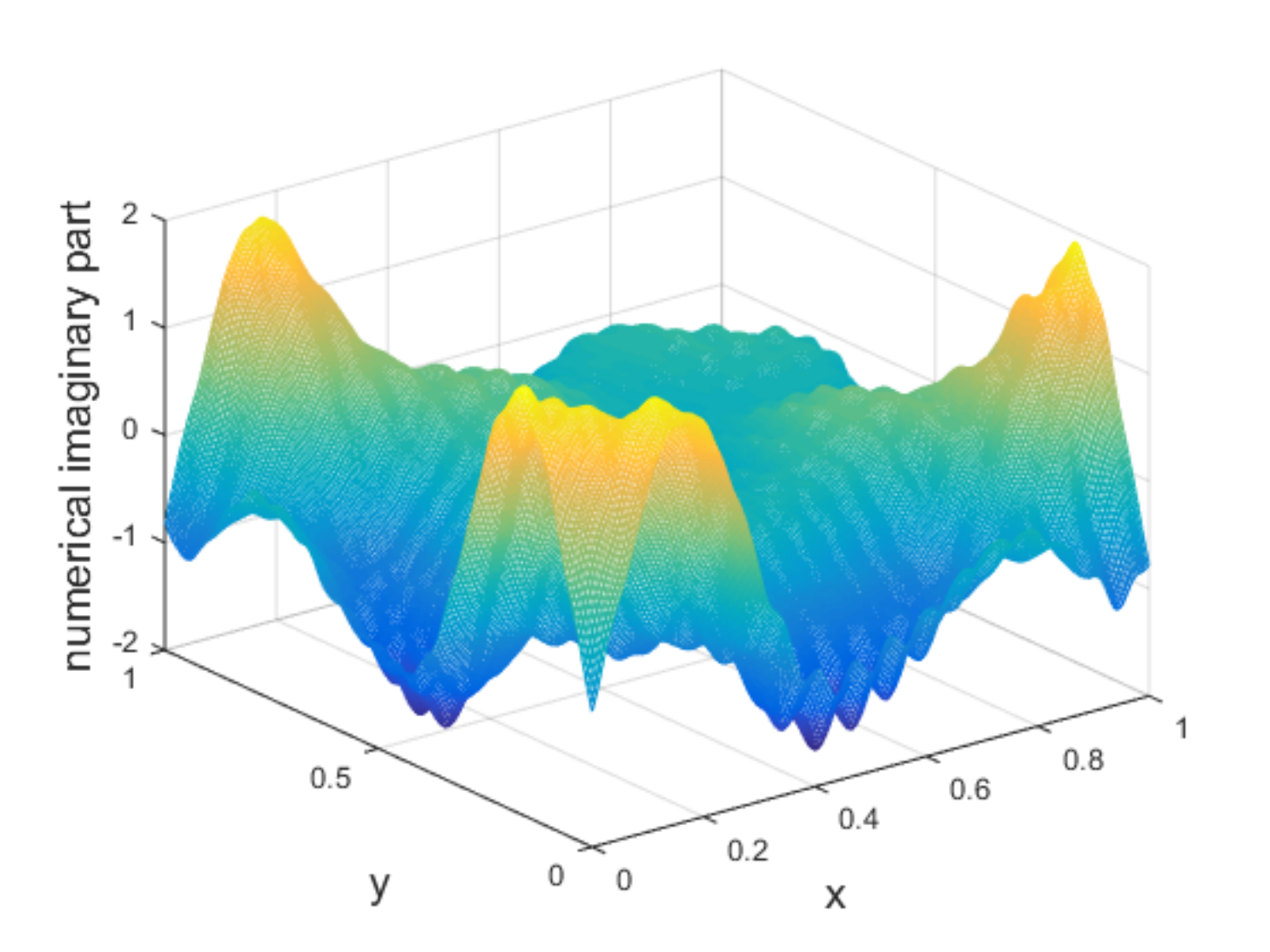}
				%  \caption{Mean}
				\label{fig:imagpart_1}
			\end{subfigure}
			\begin{subfigure}{0.39\textwidth}
				\centering
				\includegraphics[width=\textwidth]{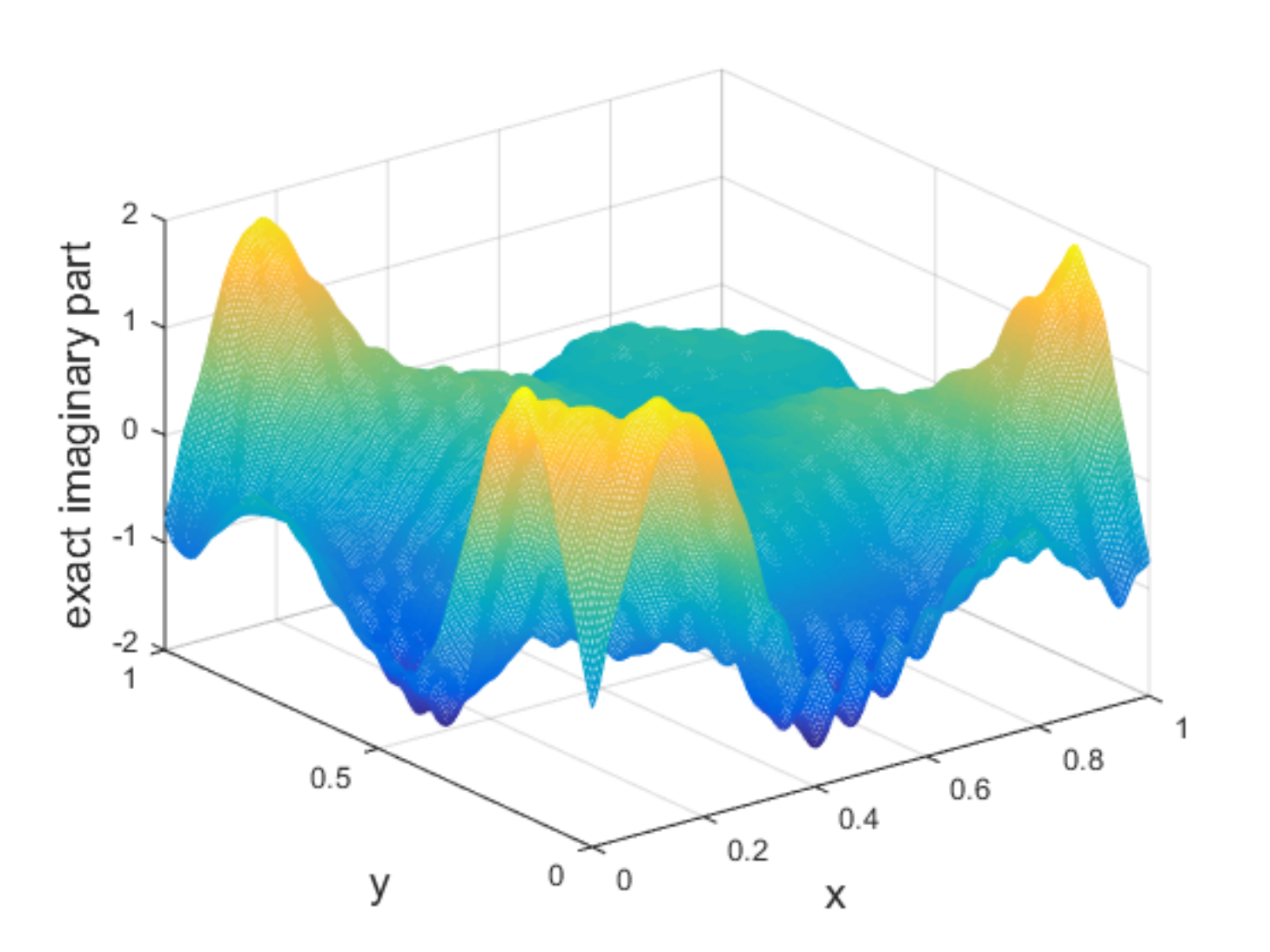}
				% \caption{STD}
				\label{fig:imagpart_2}
			\end{subfigure}
			\caption{Imaginary part of the wavefunction $ \psi^{\epsilon}(x,y,t) $ for \textbf{Example \ref{example4}} when $ H / \epsilon = 1/4$ and $ \epsilon=1/16 $. Left: numerical solution; Right: exact solution.}
			\label{fig8}
		\end{figure}
		
	\end{example}

	\begin{example}[2D case with a checkboard potential]\label{example5}
		The checkboard potential is of the following form
		\begin{equation}\label{eqn:potential5}
		v^{\epsilon}= u + \left\{
		\begin{aligned}
		(\cos(2\pi\frac{x}{\epsilon_2})+1)(\cos(2\pi\frac{y}{\epsilon_2})+1),\qquad&\{0\leq x, y\leq\frac{1}{2}\}\cup\{\frac{1}{2}\leq x, y\leq 1\},\\
		(\cos(2\pi\frac{x}{\epsilon_1})+1)(\cos(2\pi\frac{y}{\epsilon_1})+1),\qquad& \text{otherwise},\\
		\end{aligned}
		\right.
		\end{equation}
		where $\epsilon_1=1/8$, $\epsilon_2=1/16$, and the external potential $u(x,y)=|x-0.5|^2+|y-0.5|^2$. In the absence of the external potential,
		the profile of \eqref{eqn:potential5} is visualized in Figure \ref{fig:potential}. It allows for multiple spatial scales and
		discontinuities around interfaces, as in quantum metamaterials \cite{Quach:2011}.
		\begin{figure}[H]
			\centering
			\includegraphics[width=0.5\linewidth]{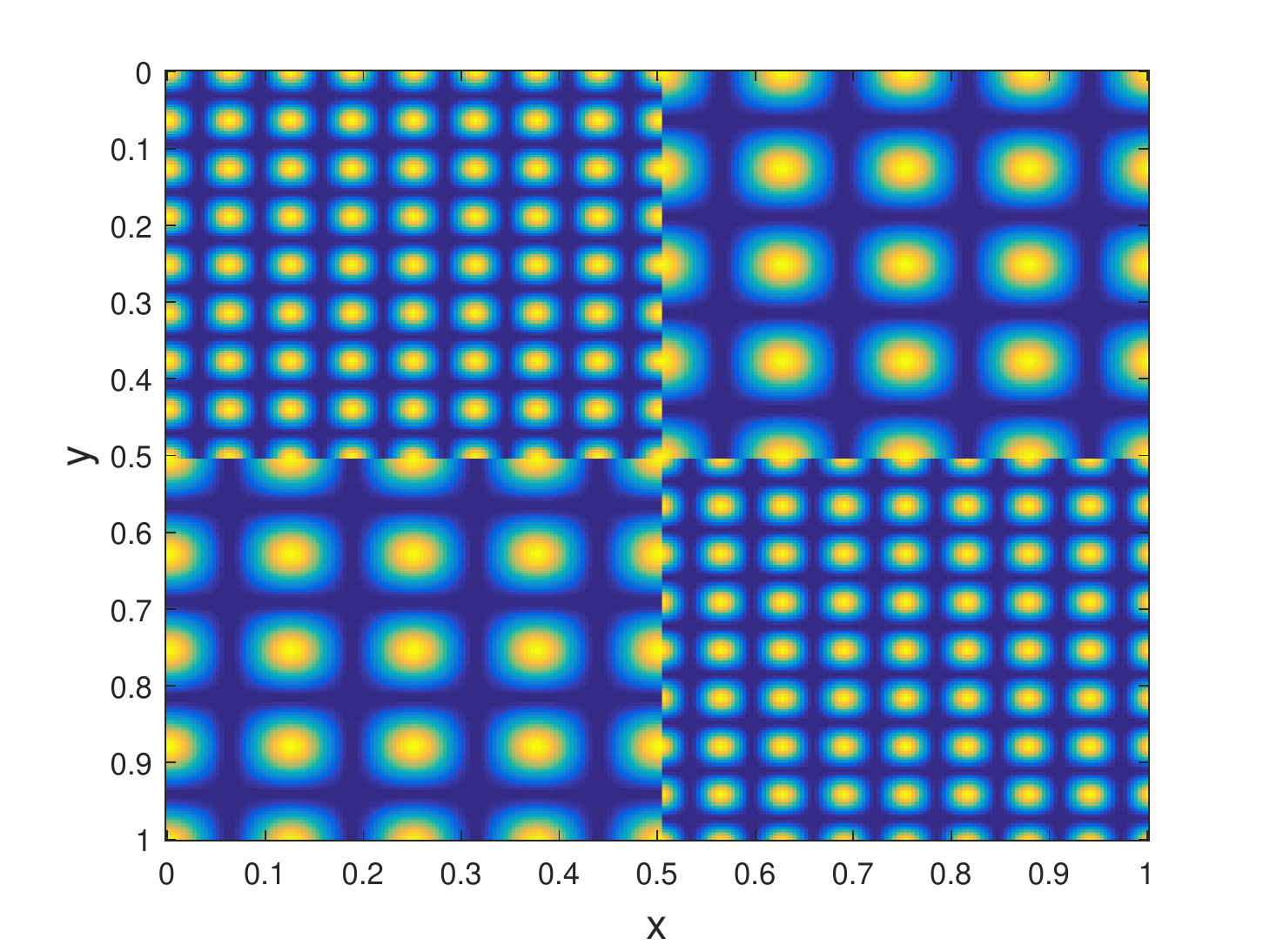}
			\caption{A checkboard-type potential defined over the unit square.}
			\label{fig:potential}
		\end{figure}
		
		Table \ref{table8} records the relative errors in both $L^2$ norm and $H^1$ norm for a series of coarse meshes satisfying
		$ H/\epsilon = 1/2, 1/4, 1/8$ with $\epsilon=1/16$. The meshsize condition \eqref{eqn:meshcondition} and convergence rates
		2 and 1 in $L^2$ norm and $H^1$ norm are suggested.
		\begin{table}[H]
			\centering
			\begin{tabular}{|r|r|r|r|r|}
				\hline
				$ H/\epsilon $ & $ \textrm{Error}_{L^2} $ & Order & $ \textrm{Error}_{H^1} $ & Order \\
				\hline
				$ 	1/2 $ &0.08832309&       &0.56796869&  \\
				$ 1/4  $& 0.01969196& 2.21  &0.18631339& 1.61\\
				$ 1/8 $& 0.00274243& 2.86  & 0.06010238& 1.63\\
				\hline
			\end{tabular}%
			\caption{Relative $L^2$ and $H^1$ errors of the wavefunction for \textbf{Example \ref{example5}} when $ \epsilon=1/16 $.}
			\label{table8}
		\end{table}
		
		Profiles of the position density function \eqref{eqn:density} and the energy density function \eqref{eqn:energy}
		when $\veps=1/16$ are plotted in Figure \ref{fig4} and Figure \ref{fig5}, respectively. Similar to those in previous exmaples,
		excellent agreements between numerical solutions and the exact solutions are observed again.
		\begin{figure}[H]
			\centering
			\begin{subfigure}{0.39\textwidth}
				\centering
				\includegraphics[width=\textwidth]{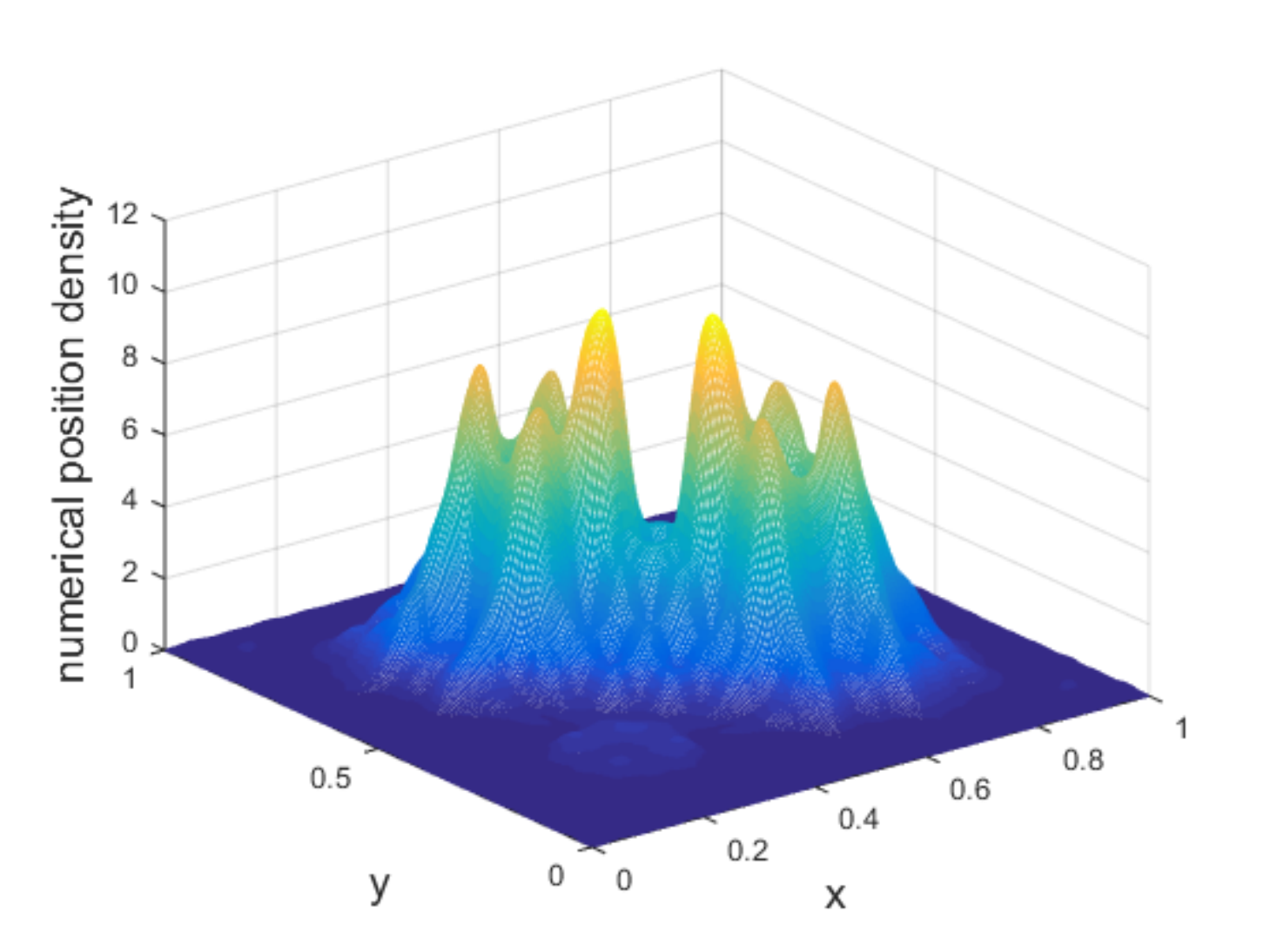}
				%  \caption{Mean}
				\label{fig:CHECKposition_1}
			\end{subfigure}
			\begin{subfigure}{0.39\textwidth}
				\centering
				\includegraphics[width=\textwidth]{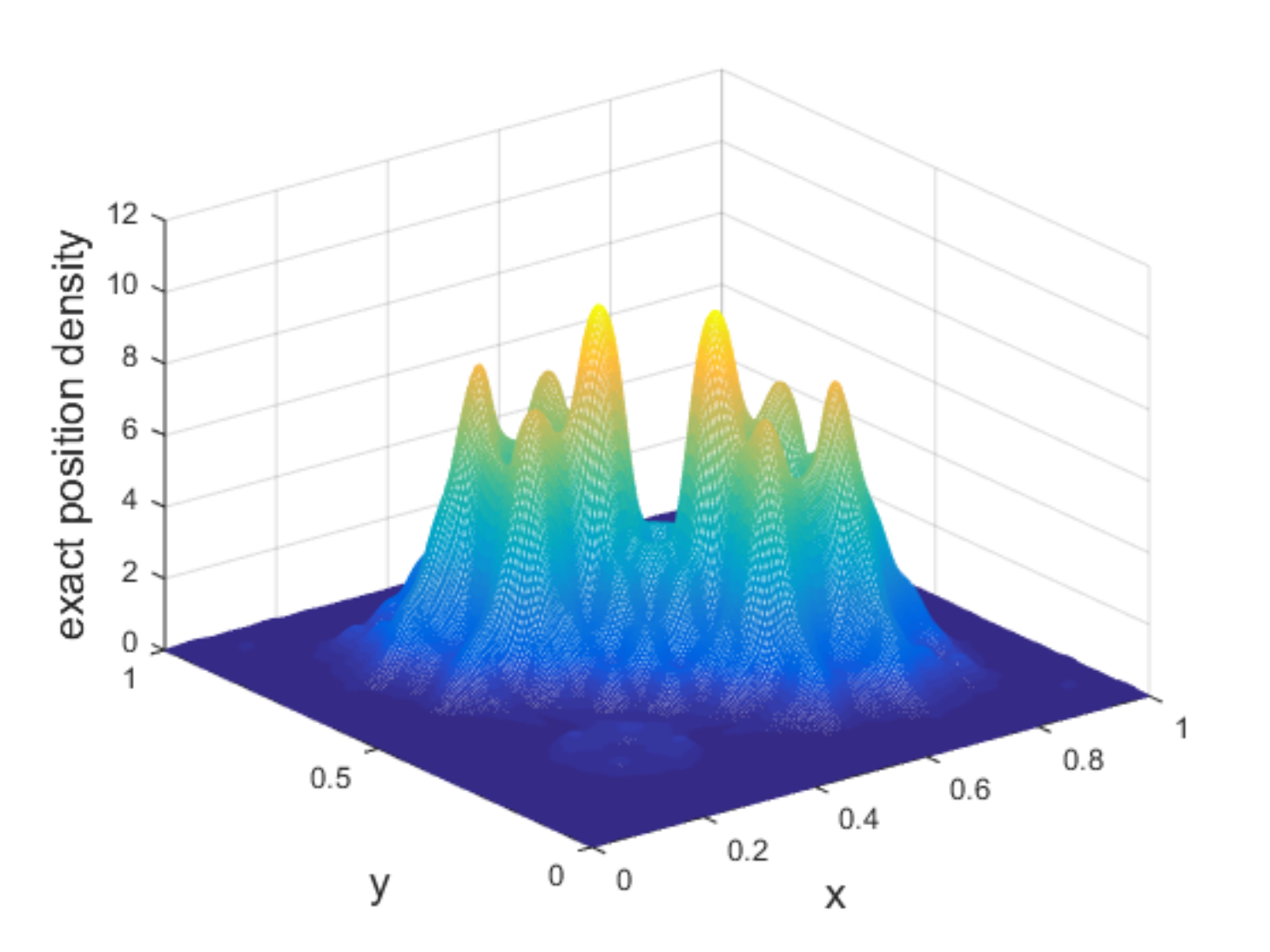}
				% \caption{STD}
				\label{fig:CHECKposition_2}
			\end{subfigure}
			\caption{Profiles of position density functions for \textbf{Example \ref{example5}} when $ H/\epsilon=1/4$ and $ \epsilon=1/16 $. Left: numerical solution; Right: exact solution.}
			\label{fig9}
		\end{figure}
		
		\begin{figure}[H]
			\centering
			\begin{subfigure}{0.39\textwidth}
				\centering
				\includegraphics[width=\textwidth]{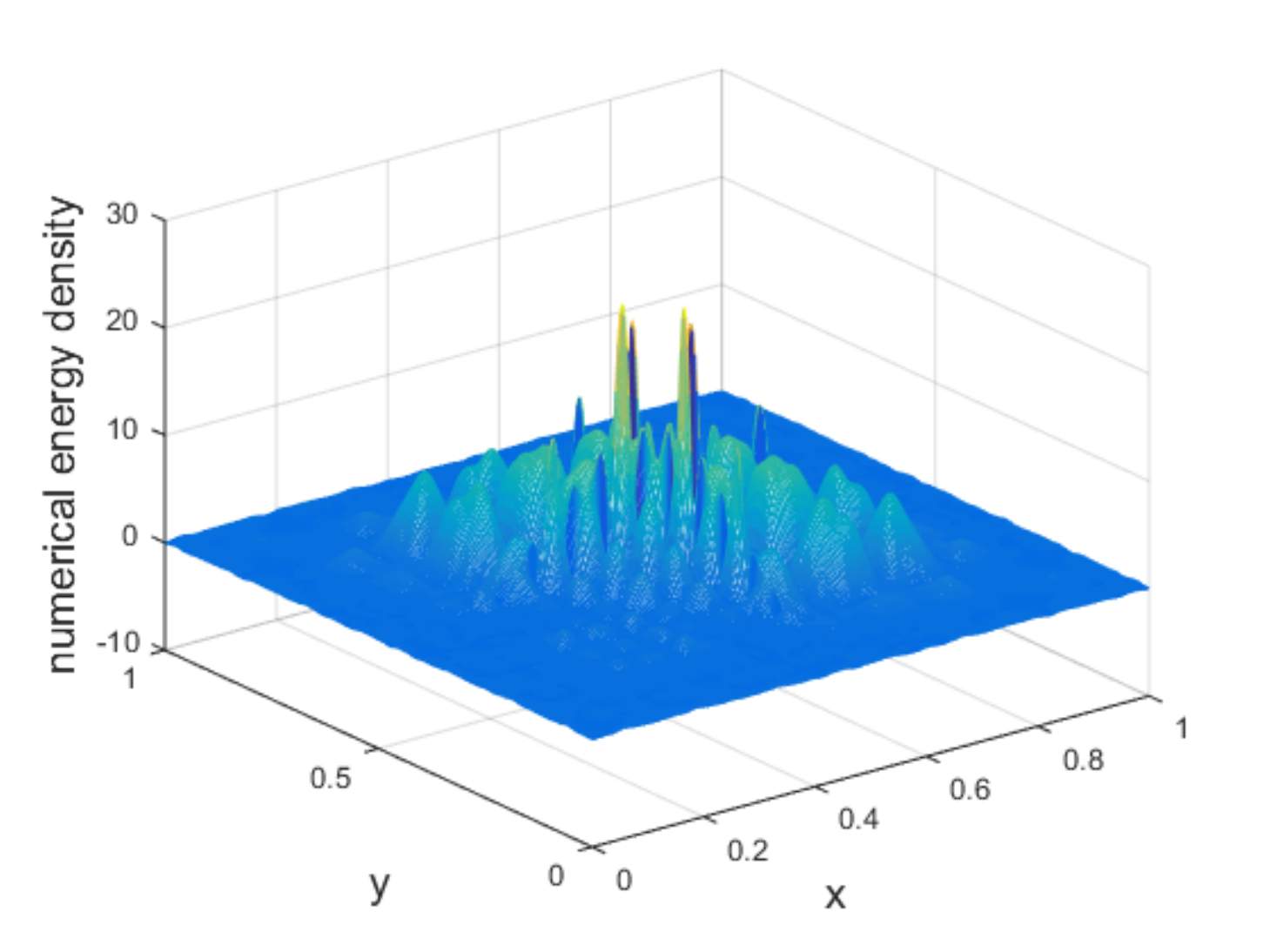}
				%  \caption{Mean}
				\label{fig:CHECKenergy_1}
			\end{subfigure}
			\begin{subfigure}{0.39\textwidth}
				\centering
				\includegraphics[width=\textwidth]{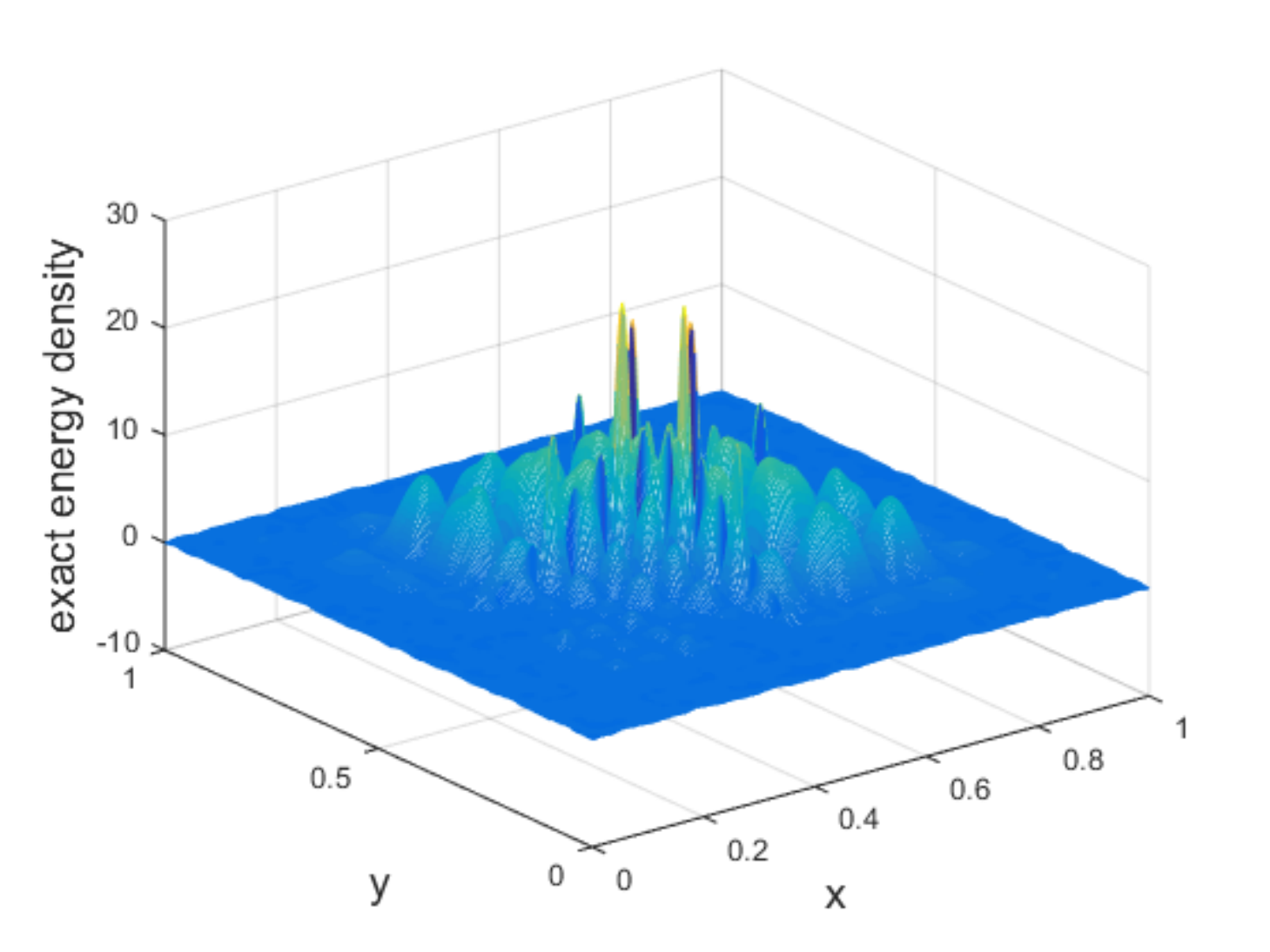}
				% \caption{STD}
				\label{fig:CHECKenergy_2}
			\end{subfigure}
			\caption{Profiles of energy density functions for \textbf{Example \ref{example5}} when $ H/\epsilon=1/4$ and $ \epsilon=1/16 $. Left: numerical solution; Right: exact solution.}
			\label{fig10}
		\end{figure}
		
	\end{example}

	\section{Conclusion and discussion} \label{sec:Conclusion}
	\noindent	
	In this paper, we have proposed a multiscale finite element method to solve the Schr\"{o}dinger equation with multiscale potentials in the semiclassical regime. The localized multiscale basis are constructed using sparse compression of the Hamiltonian operator, and thus are "blind" to the specific form of the potential. After an one-shot eigendecomposition, we can solve the resulting system of ordinary differential equations explicitly for the time evolution. In our approach, the spatial mesh size is $ H=\mathcal{O}(\epsilon) $ where $\epsilon$ is the semiclassical parameter and the time step $ k$ is independent of $\epsilon$. Numerous numerical examples in both 1D and 2D are given to demonstrate the efficiency and robustness of the proposed method.
	
	In the literature, asymptotics-based methods have the uniform $L^2-$approximation of the wavefunction, but do not have similar results in $H^1$ norm due to the ansatz used to construct the approximate solution. Our approach, however, have second-order and first-order rates of convergences in $L^2$ norm and $H^1$ norm, as illustrated in \secref{sec:NumericalExamples} by examples in both 1D and 2D with multiscale potentials. The convergence analysis of the proposed method will be presented in \cite{ChenMaZhang:prep}.
	
	From the perspective of physics, random information can be added to the Schr\"{o}dinger equation to study the Anderson
	localization phenomenon \cite{anderson1958absence}, which was studied in a recent work \cite{WuHuang:2016} that combines
	the Bloch decomposition-based split-step pseudospectral method and the generalized polynomial chaos method.
	Meanwhile, along another line, Hou, Ma, and Zhang proposed to build localized multiscale basis functions with
	the generalized polynomial chaos method \cite{hou2018model} to solve elliptic problems with random coefficients.
	One may expect a natural extension of \cite{hou2018model} would work for the random Schr\"{o}dinger equation.
	Unfortunately, it is not the case due to different natures of these two types of equations.
	Therefore, substantial work needs to be done to study the random Schr\"{o}dinger equation with multiple
	random inputs over long time.
	
	From the perspective of materials sciences, the proposed method can be combined with numerical methods for Landau-Lifshitz equation \cite{KruikProhl:2006, ChenLiuZhou:2016} to study current-driven domain wall dynamics \cite{Zutic:2004, Chen:2015} which are of great interest in spintronic devices.
	
	\section*{Acknowledgements}
	\noindent
	J. Chen acknowledges the financial support by National Natural Science Foundation of China via grant 21602149.	Z. Zhang acknowledges the financial support of Hong Kong RGC grants (Projects 27300616, 17300817, and 17300318) and National Natural Science Foundation of China via grant 11601457, Seed Funding Programme for Basic Research (HKU), and an RAE Improvement Fund from the Faculty of Science (HKU). Part of the work was done when J. Chen was visiting Department of Mathematics, University of Hong Kong. J. Chen would like to thank its hospitality.
	We would like to thank Professor Thomas Hou for stimulating discussions.
	
	%\section{Determine the random dimension of the solution space} \label{sec:StochasticDimension}
	
	%\section*{References}
	%	\bibliographystyle{plain}
	\bibliographystyle{siam}
	\bibliography{ZWpaper}

\begin{thebibliography}{10}

\bibitem{anderson1958absence}
{\sc P.~W. Anderson}, {\em Absence of diffusion in certain random lattices},
  Phys. Rev., 109 (1958), p.~1492.

\bibitem{Babuska:94}
{\sc I.~Babuska, G.~Caloz, and E.~Osborn}, {\em Special finite element methods
  for a class of second order elliptic problems with rough coefficients}, SIAM
  J. Numer. Anal., 31 (1994), pp.~945--981.

\bibitem{BabuskaLipton:2011}
{\sc I.~Babuska and R.~Lipton}, {\em Optimal local approximation spaces for
  generalized finite element methods with application to multiscale problems},
  Multiscale Model. Simul., 9(1) (2011), pp.~373--406.

\bibitem{BaoJinMarkowich:2002}
{\sc W.~Bao, S.~Jin, and P.~A. Markowich}, {\em On time-splitting spectral
  approximations for the {S}chr\"odinger equation in the semiclassical regime},
  J. Comput. Phys., 175 (2002), pp.~487--524.

\bibitem{Chen:2015}
{\sc J.~Chen, C.~J. Garc\'{i}a-Cervera, and X.~Yang}, {\em A mean-field model
  for spin dynamics in multilayered ferromagnetic media}, Multiscale Model.
  Simul., 13 (2015), pp.~551--570.

\bibitem{ChenLiuZhou:2016}
{\sc J.~Chen, J.-G. Liu, and Z.~Zhou}, {\em On a
  {S}chr\"{o}dinger--{L}andau--{L}ifshitz system: Variational structure and
  numerical methods}, Multiscale Model. Simul., 14 (2016), pp.~1463--1487.

\bibitem{ChenMaZhang:prep}
{\sc J.~Chen, D.~Ma, and Z.~Zhang}, {\em Convergence of a multiscale finite
  element method for the {S}chr\"{o}dinger equation with multiscale
  potentials}.
\newblock In preparation.

\bibitem{ChenHou:02}
{\sc Z.~Chen and T.~Y. Hou}, {\em A mixed multiscale finite element method for
  elliptic problems with oscillating coefficients}, Math. Comp., 72 (2002),
  pp.~541--576.

\bibitem{DelgadilloLuYang:2016}
{\sc R.~Delgadillo, J.~Lu, and X.~Yang}, {\em Gauge-invariant frozen gaussian
  approximation method for the {S}chr\"{o}dinger equation with periodic
  potentials}, SIAM J. Sci. Comput., 38 (2016), pp.~A2440--A2463.

\bibitem{EngquistE:03}
{\sc W.~E and B.~Engquist}, {\em The heterogeneous multi-scale methods}, Comm.
  Math. Sci., 1 (2003), pp.~87--133.

\bibitem{EfendievHou:09}
{\sc Y.~Efendiev and T.~Y. Hou}, {\em Multiscale finite element methods.
  {T}heory and applications}, Springer-Verlag, New York, 2009.

\bibitem{HanZhangCMS:12}
{\sc H.~Han and Z.~Zhang}, {\em Multiscale tailored finite point method for
  second order elliptic equations with rough or highly oscillatory
  coefficients}, Comm. Math. Sci., 10 (2012), pp.~945--976.

\bibitem{hou2018model}
{\sc T.~Y. Hou, D.~Ma, and Z.~Zhang}, {\em A model reduction method for
  multiscale elliptic pdes with random coefficients using an optimization
  approach}, arXiv:1807.02394,  (2018).

\bibitem{HouWu:97}
{\sc T.~Y. Hou and X.~Wu}, {\em A multiscale finite element method for elliptic
  problems in composite materials and porous media}, J. Comput. Phys., 134
  (1997), pp.~169--189.

\bibitem{HouWuCai:99}
{\sc T.~Y. Hou, X.~Wu, and Z.~Cai}, {\em Convergence of a multiscale finite
  element method for elliptic problems with rapidly oscillating coefficients},
  Math. Comp., 68(227) (1999), pp.~913--943.

\bibitem{hou2017sparse}
{\sc T.~Y. Hou and P.~Zhang}, {\em Sparse operator compression of higher-order
  elliptic operators with rough coefficients}, Res. Math. Sci., 4 (2017),
  p.~24.

\bibitem{Huangetal:2007}
{\sc Z.~Huang, S.~Jin, P.~Markowich, and C.~Sparber}, {\em A {B}loch
  decomposition-based split-step pseudospectral method for quantum dynamics
  with periodic potentials}, SIAM J. Sci. Comput., 29 (2007), pp.~515--538.

\bibitem{Huang:2008}
\leavevmode\vrule height 2pt depth -1.6pt width 23pt, {\em Numerical simulation
  of the nonlinear {S}chr\"{o}dinger equation with multidimensional periodic
  potentials}, Multiscale Model. Simul., 7 (2008), pp.~539--564.

\bibitem{Hughes:1998}
{\sc T.~J. Hughes, G.~R. Feijoo, L.~Mazzei, and J.~B. Quincy}, {\em The
  variational multiscale method-a paradigm for computational mechanics},
  Comput. Methods Appl. Mech. Eng., 166(1-2) (1998), pp.~3--24.

\bibitem{Zutic:2004}
{\sc I.~\ifmmode \check{Z}\else \v{Z}\fi{}uti\ifmmode~\acute{c}\else
  \'{c}\fi{}, J.~Fabian, and S.~Das~Sarma}, {\em Spintronics: Fundamentals and
  applications}, Rev. Mod. Phys., 76 (2004), pp.~323--410.

\bibitem{Jenny:03}
{\sc P.~Jenny, S.~Lee, and T.~H.}, {\em Multiscale finite volume method for
  elliptic problems in subsurface flow simulation}, J. Comput. Phys., 187
  (2003), pp.~47--67.

\bibitem{JinActa:2011}
{\sc S.~Jin, P.~Markowich, and C.~Sparber}, {\em Mathematical and computational
  methods for semiclassical {S}chr\"odinger equation}, Acta Numer., 20 (2011),
  pp.~121--209.

\bibitem{Jin:08}
{\sc S.~Jin, H.~Wu, and X.~Yang}, {\em Gaussian beam methods for the
  {S}chr\"{o}dinger equation in the semi-classical regime: {L}agrangian and
  {E}ulerian formulations}, Comm. Math. Sci., 6 (2008), pp.~995--1020.

\bibitem{Jin:2010}
{\sc S.~Jin, H.~Wu, X.~Yang, and Z.~Huang}, {\em Bloch decomposition-based
  gaussian beam method for the schr\"{o}dinger equation with periodic
  potentials}, J. Comput. Phys., 229 (2010), pp.~4869 -- 4883.

\bibitem{Kevrekidis:2003}
{\sc I.~G. Kevrekidis, C.~W. Gear, J.~M. Hyman, P.~G. Kevrekidid, O.~Runborg,
  and C.~Theodoropoulos}, {\em Equation-free, coarse-grained multiscale
  computation: {E}nabling mocroscopic simulators to perform system-level
  analysis}, Comm. Math. Sci., 1(4) (2003), pp.~715--762.

\bibitem{KruikProhl:2006}
{\sc M.~Kru\'{i}k and A.~Prohl}, {\em Recent developments in the modeling,
  analysis, and numerics of ferromagnetism}, SIAM Rev., 48 (2006),
  pp.~439--483.

\bibitem{LiZhangCiCP:18}
{\sc S.~Li and Z.~Zhang}, {\em Computing eigenvalues and eigenfunctions of
  {S}chr{\"o}dinger equations using a model reduction approach}, Comm. Comput.
  Phys., 24 (2018), pp.~1073--1100.

\bibitem{Louwenetal:2016}
{\sc A.~Louwen, W.~van Sark, R.~Schropp, and A.~Faaij}, {\em A cost roadmap for
  silicon heterojunction solar cells}, Sol. Energy Mater Sol. Cells, 147
  (2016), pp.~295 -- 314.

\bibitem{Peterseim:2014}
{\sc A.~Malqvist and D.~Peterseim}, {\em Localization of elliptic multiscale
  problems}, Math. Comp., 83(290) (2014), pp.~2583--2603.

\bibitem{Owhadi:2015}
{\sc H.~Owhadi}, {\em Bayesian numerical homogenization}, SIAM Multiscale
  Model. Simul., 13(3) (2015), pp.~812--828.

\bibitem{Owhadi:2017}
\leavevmode\vrule height 2pt depth -1.6pt width 23pt, {\em Multigrid with rough
  coefficients and {M}ultiresolution operator decomposition from {H}ierarchical
  {I}nformation {G}ames}, SIAM Rev., 59(1) (2017), pp.~99--149.

\bibitem{OwhadiZhang:07}
{\sc H.~Owhadi and L.~Zhang}, {\em Metric based up-scaling}, Comm. Pure Appl.
  Math., LX (2007), pp.~675--723.

\bibitem{Qian:2010}
{\sc J.~Qian and L.~Ying}, {\em Fast {G}aussian wavepacket transforms and
  {G}aussian beams for the {S}chr\"{o}dinger equation}, J. Comput. Phys., 229
  (2010), pp.~7848 -- 7873.

\bibitem{Quach:2011}
{\sc J.~Q. Quach, C.-H. Su, A.~M. Martin, A.~D. Greentree, and L.~C.~L.
  Hollenberg}, {\em Reconfigurable quantum metamaterials}, Opt. Express, 19
  (2011), pp.~11018--11033.

\bibitem{WuHuang:2016}
{\sc Z.~Wu and Z.~Huang}, {\em A {B}loch decomposition-based stochastic
  {G}alerkin method for quantum dynamics with a random external potential}, J.
  Comput. Phys., 317 (2016), pp.~257 -- 275.

\bibitem{Yin:2011}
{\sc D.~Yin and C.~Zheng}, {\em Gaussian beam formulations and interface
  conditions for the one-dimensional linear {S}chr\"{o}dinger equation}, Wave
  Motion, 48 (2011), pp.~310 -- 324.

\end{thebibliography}
	
	%\section*{References}
	%\begin{thebibliography}{10}
	%\end{thebibliography}
	
\end{document}